\documentclass{article}
\pdfoutput=1

\usepackage{geometry} % - Control margins
\usepackage{amssymb}
\usepackage{amsmath}
\usepackage{mathtools}
\usepackage{amsthm}
\usepackage{amsfonts}
\usepackage{kpfonts} % - Nice overall font style
\usepackage{bbm} % - allows for \mathbbm fonts
\usepackage{dutchcal} % - mathcal font style
\usepackage[mathscr]{euscript} % - mathscr font style
\usepackage{tikz-cd} % - For commutative diagrams
\usepackage{todonotes} % - For nice informal todo notes
\usepackage{graphicx} % - Required for inserting images
\usepackage[toc,page]{appendix} % - used for making appendices
\usepackage{xcolor} % - for colors
\usepackage{float} % - force a figure to stay put
\usepackage{enumitem} % - more detailed options for enumerate
\usepackage{comment} % - for long comments
\usepackage[normalem]{ulem} %for strikethrough e.g. \sout{blah}
\usepackage{quiver} % - for quick diagram creation q.uiver.app
\usepackage[linesnumbered,ruled,vlined]{algorithm2e}
\SetKwInput{KwIn}{Input}
\SetKwInput{KwOut}{Output}
\SetArgSty{textnormal} % ensures condition arguments are upright text

\renewcommand\mathfrak[1]{\mbox{\usefont{U}{euf}{m}{n}#1}}

\definecolor{winered}{rgb}{0.5,0,0}
\usepackage[citecolor = winered, 
            urlcolor = winered, 
            linkcolor = winered,
            menucolor = winered, 
            colorlinks = true]{hyperref}

\usepackage[style = alphabetic,
            backref = true,
            backrefstyle = two]{biblatex}
\DefineBibliographyStrings{english}{%
  backrefpage = {p.}, % - originally "cit. on p.", changed to "p."
  backrefpages = {pp.}, % - originally "cit. on pp.", changed to "pp."
}

\usepackage{url}
\usepackage{pslatex}
\usepackage{datetime}
\newdateformat{versiondate}{%
\THEMONTH\THEDAY}

\newcommand{\defproblem}[3]{
	\vspace{1mm}
	\noindent\fbox{
		\begin{minipage}{0.96\textwidth}
			\begin{tabular*}{\textwidth}{@{\extracolsep{\fill}}lr} \textsc{#1} &  \\ \end{tabular*}
			{\bf{Input:}} #2 \\
			{\bf{Question:}} #3
		\end{minipage}
	}
	\vspace{1mm}
}

\usepackage{xpatch} 
\usepackage{sectsty}
\newcommand{\secfont}{\fontfamily{lmss}\selectfont}
\allsectionsfont{\secfont}

\newtheoremstyle{zoltanstyle}
  {1em} % Space above
  {\topsep} % Space below, usually \topsep
  {} % Body font
  {} % Indent amount
  {\bfseries} % Theorem head font \bfseries
  {.} % Punctuation after theorem head
  {.5em} % Space after theorem head
  {} % Theorem head spec (can be left empty, meaning `normal')

\theoremstyle{zoltanstyle}
\swapnumbers
\xpatchcmd\swappedhead{~}{.~}{}{}
\newtheorem{body}{}
\numberwithin{body}{section}

\newtheorem{corollary}[body]{\secfont Corollary}
\newtheorem{definition}[body]{\secfont Definition}
\newtheorem{example}[body]{\secfont Example}

\newtheorem{lemma}[body]{\secfont Lemma}

\newtheorem{proposition}[body]{\secfont Proposition}
\newtheorem{remark}[body]{\secfont Remark}

\newtheorem{theorem}[body]{\secfont Theorem}
\makeatletter
\newtheorem*{rep@theorem}{\rep@title}
\newcommand{\newreptheorem}[2]{%
\newenvironment{rep#1}[1]{%
 \def\rep@title{#2 \ref{##1}}%
 \begin{rep@theorem}}%
 {\end{rep@theorem}}}
\makeatother
\newreptheorem{theorem}{Theorem}
\newreptheorem{lemma}{Lemma}
\newreptheorem{corollary}{Corollary}
\newreptheorem{proposition}{Proposition}
\newcommand{\ncat}{\mathbf} % - named categories
\newcommand{\cat}{\mathscr} % - unnamed categories
\newcommand{\colim}{\text{colim}}

\newcommand{\op}{\text{op}}
\newcommand{\Oa}{\cat{O}}
\newcommand{\im}{\text{im}}
\renewcommand{\Im}{\text{Im}}
\newcommand{\np}{\text{NP}}
\newcommand{\sint}{\mathop{\raisebox{0.2ex}{$\smallint \! \!$}}}
\newcommand{\Filter}{\text{Filter}}
\newcommand{\Glue}{\text{Glue}}

\definecolor{emilioeditcolor}{rgb}{0.94, 0.97, 1.0}

\definecolor{daneditcolor}{rgb}{1.0, 0.71, 0.76}

\definecolor{beneditcolor}{rgb}{1.0, 0.71, 0.76}

\title{A Parameterized Algorithm for Testing whether the Limit of a Diagram is Empty}

\author{Ernst Althaus\thanks{Johannes Gutenberg University of Mainz}, \, Benjamin Merlin Bumpus\thanks{Universidade de São Paulo}, \, James Fairbanks\thanks{University of Florida}, \\ Emilio Minichiello\thanks{CUNY CityTech} \, and Daniel Rosiak\thanks{NIST}}
\date{}
\begin{document}

\maketitle
\begin{abstract}
A limit of a (small) diagram $d : \cat{J} \to \cat{E}$ in a complete category $\cat{E}$ can be thought of as specifying a set of equations involving the objects of $\cat{E}$. To motivate this intuitively, one can think of each object $d(j)$ as a ``variable'' and each morphism in $\cat{J}$ as a ``constraint'' connecting these variables. If $\cat{E}$ has an initial object, a natural question arises: does our set of equations have any solution at all? Equivalently, we can ask: is the limit of $d$ initial? In this paper we consider the computational problem that, given finite diagram $d$ in a finitely complete category $\cat{E}$, asks whether its limit is empty. We construct a fast algorithm (in the sense of parameterized complexity theory) that solves this problem when $\cat{E}$ is of the form $\ncat{FinSet}^{\cat{C}}$ for a finite category $\cat{C}$ and $d$ is a structured co-decomposition, i.e. a diagram arising from the opposite of the Grothendieck construction of a simple graph.

\end{abstract}

\section{Introduction}
%Given a finite diagram $d : \cat{J} \to \cat{E}$ in a finitely complete category $\cat{E}$, its limit $\lim d$ can be thought of as specifying a set of equations involving the objects of $\cat{E}$. To motivate this intuitively, one can think of each object $d(j)$ as a ``variable" and each morphism in $\cat{J}$ as a ``constraint" connecting these variables. If $\cat{E}$ has an initial object, a natural question arises: does our set of equations have any solution at all? Equivalently, we can ask: is the limit of $d$ an initial object in $\cat{E}$?

In this paper we seek a fast (parameterized) algorithm for the following computational problem.

\begin{center}
 \defproblem{$\textsc{InLim}(\cat{J}, \cat{E}$)}{A finite diagram $d : \cat{J} \to \cat{E}$ in a finitely complete category $\cat{E}$}{Is the limit of $d$ an initial object, i.e. $\lim d \cong \varnothing$}   
\end{center}

Such computational problems have recently gained prominence with the rise of applied category theory, particularly in the use of computations involving $\cat{C}$-sets and categorical database theory \cite{patterson2022categorical, meyers2022fast, gorenpresenting, brown2023computational} and in topological data analysis \cite{dey2026limitcomputationposetsminimal}. While the field of \textit{categorical algorithmics} -- the use of category theory to obtain fast algorithms -- is still in its infancy, it is already apparent that fast algorithms are needed for computations modeled on categories.

Many computational problems can be formulated as functors $\cat{E}^\op \to \ncat{FinSet}$, where $\cat{E}$ is a category of structures one is interested in. For example, when $\cat{E}$ is the category of loop graphs and $K^n$ denotes the unlooped complete graph on $n$ vertices, the functor $\cat{E}(-,K^n) : \cat{E}^\op \to \ncat{FinSet}$ assigns to every loop graph $G$ its set of $n$-colorings. If this set is nonempty, then $G$ is $n$-colorable. Similarly, querying a database can be modeled via mappings of database instances \cite{schultz2017algebraic}.

Prior work \cite{bumpus2025structured} studies the notion of a structured decomposition, which is a diagram of the form $d : \sint G \to \cat{E}$, where $\sint G$ is the Grothendieck construction associated to a finite simple graph $G$ (Definition \ref{def: barycentric subdivision}). Such diagrams arise naturally when looking to decompose a structure like a graph or database into smaller pieces. 

Hence the pipeline for computational problems we have in mind is as follows: Given some discrete structure $X$ in a category $\cat{E}$, such as a graph or database, along with a computational problem $P : \cat{E}^\op \to \ncat{FinSet}$, we wish to solve $P$ on input $X$ by checking if the set $P(X)$ has any elements. When $X$ is large, computing $P(X)$ by brute force can be prohibitively expensive. Instead, we break $X$ up into pieces using a structured decomposition $d : \sint G \to \cat{E}$, with $X \cong \colim \, d$. If $P$ preserves $\sint G$-shaped colimits, then composing $P$ with $d$ gives what we call a structured co-decomposition $(P \circ d): \sint G^\op \to \ncat{FinSet}$. Then checking if $P(X) \cong \varnothing$ is equivalent to checking if $\lim (P \circ d) \cong \varnothing$.

By reformulating computational problems in this way, we can use the method of ``divide and conquer'' by checking if $(P \circ d)(x) \cong \varnothing$ for each vertex $x \in G$, and then combining the results. This results in a significantly faster algorithm than would usually be obtainable. Furthermore, this is an extremely general construction. For example, it applies to any computational problem $P$ which is co-representable.

More formally, here we will understand a ``fast algorithm'' to mean \textbf{fixed parameter tractable}, which in our context roughly means that the running time depends \textit{boundedly} (potentially exponentially) on an appropriately defined size of the objects in the diagram, but only \textit{linearly} on the number of objects in the domain of $d$. As an example, if the objects are sets of size at most $k$, the running time may be exponential in $k$ but linear in the number of objects. The main result of our paper is the following theorem which guarantees the existence of an FPT-time algorithm for a special case of $\textsc{InLim}(\cat{J}, \cat{E})$. 

\begin{reptheorem}{thm general shape co-decomp}
Given a structured co-decomposition $d : \sint G^\op \to \ncat{FinSet}$, let $S$ be a feedback vertex set of $G$ of size $k$, let $w = \max_{x \in V(G)} |d(x)|$ and let $n = |V(G)|$. Then there is an algorithm that correctly decides $\textsc{InLim}(\sint G^\op, \ncat{FinSet})$ in time 
\begin{equation*}
 \Oa (w^k \cdot w^2 \cdot n).
\end{equation*}
\end{reptheorem}

To appreciate our main result, consider first how one might naïvely solve $\textsc{InLim}(\cat{J}, \cat{E})$. If $|\text{Obj}(\cat{J})| = n$ and $w = \max_{j \in \cat{J}} |d(j)|$, then as we work out later in (Paragraph~\ref{para: naive algorithm}), computing such a limit in the classical way via an equalizer of products is prohibitively expensive when $\cat{J}$ has many objects and morphisms, as it runs in time \[\Oa(w^n |\text{Mor}(\cat{J})|).\] In contrast, when the feedback vertex number of $G$ is small, our algorithm (Theorem~\ref{thm general shape co-decomp}) vastly outperforms the naïve approach. The most striking case occurs when $G$ is a tree since, in this case, the above result yields an algorithm that runs in time $\Oa(w^2 \cdot n)$. To appreciate the speedup, suppose $|V(G)| = 100$ and $w = 5$, then the naïve algorithm runs in roughly $8 \cdot 10^{69}$ steps, while ours requires only $2500$ steps.

Finally, the proof of Theorem \ref{thm general shape co-decomp} immediately extends to the case where $\cat{E} = \ncat{FinSet}^{\cat{C}}$, this is stated as Corollary \ref{cor solving inlim for csets}. In subsequent papers, we will apply these results to develop fast dynamic programming algorithms for generalized CSPs (constraint satisfaction problems) and their categorical duals, co-CSPs.

\newpage 

\subsection*{Notation}
Let us set some notation

\begin{itemize}
\item we let $\varnothing, *$ denote the initial and terminal object in an arbitrary category,
\item we use boldface $\ncat{C}$ for named categories and script font $\cat{C}$ for unnamed categories. The finite ordinals are denoted $\ncat{1}, \ncat{2}, \ncat{3}, \dots$, etc.
\item we assume algorithms only output \textsc{Yes} or \textsc{No}. For incorrect inputs, we assume our algorithms output \textsc{No} rather than say \textsc{Reject}.
\end{itemize}

\section{The Initial Limit Problem}

\body{
As a warm-up, let us begin our investigation of the \textsc{InLim} problem in the simplest case, namely when $\cat{J}$ is finite and discrete and $\cat{E} = \ncat{FinSet}$. This computational problem is denoted $\textsc{InLim}(\cat{J}, \ncat{FinSet})$.
}

\begin{lemma}\label{lemma: alg discrete case for sets}
Given an instance $d : \cat{J} \to \ncat{FinSet}$ of $\textsc{InLim}(\cat{J}, \ncat{FinSet})$, there is an algorithm such that
\begin{itemize}
    \item if $\cat{J}$ is not discrete, outputs $\textsc{No}$, and
    \item if $\cat{J}$ is discrete, outputs a correct solution to \textsc{InLim} in time $\Oa (|\text{Obj}(\cat{J})|).$ 
\end{itemize}
\end{lemma}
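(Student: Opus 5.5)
The plan is to use the fact that when $\cat{J}$ is discrete, $\lim d$ is just the product $\prod_{j \in \text{Obj}(\cat{J})} d(j)$ in $\ncat{FinSet}$. A finite product of finite sets is empty, i.e.\ initial, exactly when at least one factor is empty. So deciding $\textsc{InLim}$ (whether $\lim d \cong \varnothing$) amounts to checking whether some $d(j)$ is empty. Deciding this never requires building the product, which could have size up to $w^{|\text{Obj}(\cat{J})|}$.

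The algorithm has two phases.
\begin{enumerate}
\item \emph{Discreteness check.} Scan the morphisms of $\cat{J}$. If some morphism is not an identity, output $\textsc{No}$, as the first bullet of the statement requires.
\item \emph{Emptiness scan.} Otherwise iterate over the objects $j$ of $\cat{J}$. For each one, test whether $d(j)$ has any element. Output $\textsc{Yes}$ if some $d(j)$ is empty, and $\textsc{No}$ if all are nonempty.
\end{enumerate}

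Correctness then follows directly. If some $d(j) = \varnothing$, the projection $\lim d \to d(j) = \varnothing$ forces $\lim d \cong \varnothing$, since the only maps into the empty set come from the empty set. Conversely, if every $d(j)$ is nonempty, choosing one element from each gives an element of the product. Hence $\lim d \cong \varnothing$ exactly when the output is $\textsc{Yes}$.

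The only delicate point is the running-time bound $\Oa(|\text{Obj}(\cat{J})|)$. Each emptiness test must cost $\Oa(1)$, for instance by checking whether the stored set has a first element or a size field equal to zero, rather than reading the whole set. For the discreteness check, I would assume that the input for a discrete $\cat{J}$ lists only identities, so that $|\text{Mor}(\cat{J})| = |\text{Obj}(\cat{J})|$. Alternatively, the check can be aborted at the first non-identity morphism: if $\cat{J}$ is discrete the scan costs $\Oa(|\text{Obj}(\cat{J})|)$, and the time bound is only claimed in that case. With these conventions both phases are linear in the number of objects, which gives the stated bound.
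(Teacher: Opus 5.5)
Your proposal is correct and follows essentially the same argument as the paper: identify $\lim d$ with $\prod_{j} d(j)$, observe that a finite product of finite sets is empty iff some factor is empty, output \textsc{No} when $\cat{J}$ is not discrete, and otherwise scan the objects for an empty $d(j)$. The only difference is in the discreteness check. The paper stores $\cat{J}$ as a quiver without explicit identities, so it tests in constant time whether the set of non-identity arrows is empty. Your scan that aborts at the first non-identity is linear instead, but it still stays within the claimed $\Oa(|\text{Obj}(\cat{J})|)$ bound in the discrete case.
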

\begin{proof}
If $\cat{J}$ is a discrete category, then $\lim d \cong \prod_{j \in \cat{J}} d(j)$. For products of sets, it is immediate that the product $\prod_{j \in \cat{J}} d(j)$ is empty if and only if $d(j) \cong \varnothing$ for some $j \in \cat{J}$. Consider the following algorithm
    \begin{enumerate}
        \item In constant time\footnote{We assume that $\cat{J}$ is stored as a quiver, namely two dictionaries storing functions $s, t: E  \to V$ where identity arrows are not stored explicitly. Hence it suffices to check if the set of non-identity morphisms is empty, and this can be done in constant time.}, decide if $\cat{J}$ is discrete and reject if not; 
        \item For each object $j \in \cat{J}$, check if $d(j)$ is empty\footnote{Let us also assume that the functor $d$ is stored as a pair of dictionaries representing $d$'s action on objects and morphisms.}, which takes time $\cat{O}(|\text{Obj}(\cat{J})|)$. 
    \end{enumerate}
This correctly decides whether $d$ is a $\textsc{Yes}$-instance for \textsc{InLim}.
\end{proof}

\body{ 
Now what about the case when $\cat{J}$ is not discrete, but merely finite. In other words, given a finite diagram $d : \cat{J} \to \ncat{FinSet}$, how might we establish that the limit of $d$ is empty? One thing we could do is compute the limit naïvely, using the fact \cite[Theorem 3.2.9]{riehl2017category} that all limits can be written as equalizers of products
\begin{equation*}
    \lim d \cong \text{eq} \left( \prod_{j \in \cat{J}} d(j) \rightrightarrows \prod_{f : j \to j'} d(j') \right) 
\end{equation*}
and then check if the set $\lim d$ is empty.

Now if $|\text{Obj}(\cat{J})| = n$ and $k = \max_{j \in \cat{J}} |d(j)|$, then $|\prod_{j \in \cat{J}} d(j)| = \Oa (k^n)$. To compute such a limit, we must first compute this product, which is exponential in $n$, and then check compatibilities, i.e. to check if an element $( x_j)_{j \in \cat{J}} \in \prod_{j \in \cat{J}} d(j)$ belongs to the limit, then for every morphism $f : j \to j'$, we need to check that $d(f)(x_j) = x_i$. This requires checking $\Oa(k^n |\text{Mor}(\cat{J})|)$ many equalities. This is clearly prohibitively expensive when $\cat{J}$ has many objects and morphisms.
}\label{para: naive algorithm}

\body{
To tackle this problem, we borrow an idea from dynamic programming, that is, the idea of \textit{divide and conquer}. We wish to break up our diagram $d$ into pieces, solve our computational problem on the pieces and then combine these local solutions to solve the whole problem.
}

\body{
Let $\ncat{1}$ denote the category with one object and one morphism and let $\ncat{2}$ denote the category with two objects and one non-identity morphism $\{ 0 \leq 1 \}$. The $\textsc{InLim}$ problem asks us to evaluate the following composite functor
\begin{equation}\label{eqn problem as functor}
\ncat{1} \xrightarrow{d} \cat{E}^\cat{J} \xrightarrow{\lim} \cat{E} \xrightarrow{- \cong \varnothing} \ncat{2}\end{equation}
where $(-\cong \varnothing) : \cat{E} \to \ncat{2}$ is the functor that sends initial objects to $0$ and all non-initial objects to $1$. This can only be a functor if $\cat{E}$ has strict initial objects, which we will assume in what follows. Note this has the following consequence.

\begin{lemma} \label{lem strict initial means unique map from initial is mono}
If $\cat{E}$ is a category with a strict initial object $\varnothing$, then the unique map $\varnothing \to c$ is a monomorphism for any object $c \in \cat{E}$.
\end{lemma}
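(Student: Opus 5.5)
The plan is to check the definition of monomorphism directly, using only strictness of $\varnothing$. Recall that $\varnothing$ is \emph{strict initial} when every morphism $a \to \varnothing$ is an isomorphism. Write $!_c : \varnothing \to c$ for the unique map. To see it is a monomorphism, take an arbitrary object $a$ and parallel morphisms $f, g : a \to \varnothing$ with $!_c \circ f = !_c \circ g$. We must show $f = g$. Notably, we will not need the hypothesis $!_c \circ f = !_c \circ g$ at all: any two parallel maps into $\varnothing$ are already equal.

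The key steps, in order, are as follows. First, by strictness, $f : a \to \varnothing$ is an isomorphism, so $a \cong \varnothing$. Hence $a$ is itself an initial object, since being initial is invariant under isomorphism. Second, because $a$ is initial, there is exactly one morphism $a \to \varnothing$. Therefore $f = g$.

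Alternatively, one can argue without passing through initiality of $a$. From strictness, $f$ and $g$ are both isomorphisms. Then $g^{-1} \circ f$ and the identity are both endomorphisms of $\varnothing$, so they agree by initiality of $\varnothing$. Hence $f = g$.

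There is no serious obstacle. The only point requiring care is to state the definition of strict initial object being used, namely that every morphism into $\varnothing$ is an isomorphism. One could also use the equivalent formulation ``any object admitting a map to $\varnothing$ is initial''; the argument above covers both.
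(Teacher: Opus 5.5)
Your proof is correct: strictness makes any $f : a \to \varnothing$ an isomorphism, so $a$ is itself initial and any two parallel maps $a \rightrightarrows \varnothing$ coincide, which makes $\varnothing \to c$ monic without ever using the hypothesis $!_c \circ f = !_c \circ g$. The paper gives no proof of this lemma and treats it as an immediate consequence of strictness, so your argument supplies the missing justification. Your second formulation, ``any object admitting a map to $\varnothing$ is initial,'' is the form the paper implicitly relies on when it asserts that $(- \cong \varnothing) : \cat{E} \to \ncat{2}$ is a functor, and your argument applies directly in that setting.
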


A divide and conquer approach to evaluating this functor would factor (\ref{eqn problem as functor}) as follows
\begin{equation} \label{eq diag diagram}
\begin{tikzcd}
	& {\cat{E}} && {\ncat{2}} \\
	{\ncat{1}} & {\cat{E}^{\cat{J}}} && {\cat{E}^{\ncat{2}}}
	\arrow["{- \cong \varnothing}", from=1-2, to=1-4]
	\arrow["d", from=2-1, to=2-2]
	\arrow["\lim", from=2-2, to=1-2]
	\arrow["{\cat{E}^{(- \cong \varnothing)}}"', from=2-2, to=2-4]
	\arrow["{\lim := \land}"', from=2-4, to=1-4]
\end{tikzcd}
\end{equation}
by dividing the task of solving $\lim d \cong \varnothing$ into solving $d(j) \cong \varnothing$ for each $j \in \cat{J}$ and then conquering by taking the conjunction of all of the truth values.
}

\body{
It is readily seen that the diagram (\ref{eq diag diagram}) commutes for $\cat{E} = \ncat{FinSet}$ when $\cat{J}$ is discrete, but it does not commute for arbitrary finite diagrams. For example, take $\cat{E} = \ncat{FinSet}$ and $d$ to be the cospan\footnote{We are using $1$ to denote a singleton set.} $1 \xrightarrow{a} \{a, b\} \xleftarrow{b} 1$. The limit of this diagram is empty, even though each $d(j)$ is nonempty. Our remedy is to construct an endofunctor $\Im : \cat{E}^\cat{J} \to \cat{E}^{\cat{J}}$ which repairs commutativity.
    
\begin{equation}\label{diagram: the endofunctor commutes}
\begin{tikzcd}
	{\cat{E}} &&& {\ncat{2}} \\
	{\cat{E}^\cat{J}} & {\cat{E}^\cat{J}} && {\cat{E}^\ncat{2}}
	\arrow["{- \cong \varnothing}", from=1-1, to=1-4]
	\arrow[""{name=0, anchor=center, inner sep=0}, "\lim", from=2-1, to=1-1]
	\arrow["{\Im}", dashed, from=2-1, to=2-2]
	\arrow["{{\cat{E}^{(- \cong \varnothing)}}}"', from=2-2, to=2-4]
	\arrow[""{name=1, anchor=center, inner sep=0}, "{{\lim := \land}}"', from=2-4, to=1-4]
	\arrow["{{comm.}}"{description}, draw=none, from=0, to=1]
\end{tikzcd}
\end{equation}
}

\body{
Informally speaking, the desired endofunctor is one that takes a diagram $d : \cat{J} \to \cat{E}$ to its \textit{diagram of images}; that is to say a diagram $\Im(\Lambda, d) : \cat{J} \to \cat{E}$ which takes any object $j \in \cat{J}$ to the image of the arrow $\lim d \to d(j)$ in the limit cone over $d$ as sketched below.
\begin{equation*}
\begin{tikzpicture}[scale=2]
        % Cone base (ellipse)
        \draw[thick] (-1,0) arc (180:360:1 and 0.35);
        \draw[dashed] (1,0) arc (0:180:1 and 0.35);
        % Cone base label
        \node[below] at (0,-0.05) {$d$};
        % Cone sides as arrows pointing downward
        \draw[thick, -{Latex[length=3mm]}] (0,2) -- (-1,0);
        \draw[thick, -{Latex[length=3mm]}] (0,2) -- (1,0);
        % Middle cutting circle (ellipse)
        \draw[thick] (-0.6,1) arc (180:360:0.6 and 0.2);
        \draw[dashed] (0.6,1) arc (0:180:0.6 and 0.2);
        % Label for the middle circle
        \node[right] at (0.7,1) {$\Im(\Lambda, d)$};
        % Apex point
        \fill (0,2) circle (0.03);
        % Label at the apex
        \node[above] at (0,2) {$\lim d$};
        \end{tikzpicture}
\end{equation*}
In order to define this notion rigorously, we restrict the class of categories $\cat{E}$ under consideration.
}

\body{
A category $\cat{E}$ is \textbf{regular} if 
\begin{enumerate}
    \item it is finitely complete,
    \item coequalizers of kernel pairs exist, and
    \item the pullback of a regular epimorphism along any morphism is a regular epimorphism.
\end{enumerate}
}

\begin{remark}
The class of regular categories is quite large, including all quasitoposes. As a consequence, all categories we might be interested in computing $\textsc{InLim}$ in, such as a category of $\cat{C}$-sets, loop graphs or simplicial complexes, are regular.
\end{remark}

\body{
Regular categories come equipped with a very useful factorization system. Every morphism $f : c \to d$ in $\cat{E}$ has a unique (up to unique isomorphism) factorization
\begin{equation*}
    c \overset{e_f}{\twoheadrightarrow} \text{im}(f) \xhookrightarrow{i_f} d.
\end{equation*}
where $e_f$ is a regular epimorphism and $i_f$ is a monomorphism. Furthermore this is functorial in the sense that given a commutative square below left, there exists a unique map making the right-hand diagram commute
\begin{equation*}
\begin{tikzcd}
	c & d && c & {\text{im}(f)} & d \\
	{c'} & {d'} && {c'} & {\text{im}(f')} & {d'}
	\arrow["f", from=1-1, to=1-2]
	\arrow["u"', from=1-1, to=2-1]
	\arrow["v", from=1-2, to=2-2]
	\arrow["{{e_f}}", two heads, from=1-4, to=1-5]
	\arrow["u"', from=1-4, to=2-4]
	\arrow["{{i_f}}", hook, from=1-5, to=1-6]
	\arrow["h", dashed, from=1-5, to=2-5]
	\arrow["v", from=1-6, to=2-6]
	\arrow["{{{f'}}}"', from=2-1, to=2-2]
	\arrow["{{e_{f'}}}"', two heads, from=2-4, to=2-5]
	\arrow["{{i_{f'}}}"', hook, from=2-5, to=2-6]
\end{tikzcd}
\end{equation*}
This follows from the fact that regular epimorphisms in regular categories are strong epimorphisms (Lemma \ref{lem regular epi properties}). We call this the \textbf{image factorization} of $f$ in $\cat{E}$. 

Equivalently, the above discussion provides a functor
\begin{equation*}
\text{fact}: \cat{E}^{\ncat{2}} \to \cat{E}^{\ncat{3}}, \qquad \text{fact}(f) = (e_f, i_f)
\end{equation*}
which is a section of the composition functor $\circ : \cat{E}^{\ncat{3}} \to \cat{E}^{\ncat{2}}$.
}

\body{
We collect some useful properties of regular categories and regular epimorphisms below. Proofs of these statements can be found in \cite{gran2021introduction}.
}

\begin{lemma} \label{lem regular epi properties}
Given a regular category $\cat{E}$, the following properties hold:
\begin{enumerate}
\itemsep = -.2em
    \item a morphism $f$ in $\cat{E}$ is a regular epimorphism if and only if it is a strong epimorphism. This implies that regular epimorphisms in $\cat{E}$ are closed under composition and if $gf$ is a regular epimorphism, then so is $g$.
    \item if $f$ is a morphism that is both a regular epimorphism and a monomorphism, then it is an isomorphism\footnote{This is actually true in arbitrary categories.},
    \item if $f : c \to d$ is a regular epimorphism, then $i_f : \im(f) \to d$ is an isomorphism,
    \item there is a unique map $\im(gf) \to \im(g)$ making the obvious diagram including $f$ and $g$ commute,
    \item if $\cat{J}$ is a small category, then $\cat{E}^{\cat{J}}$ is regular.
\end{enumerate}
\end{lemma}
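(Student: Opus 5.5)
The plan is to prove the five items in order, taking the existence of the image factorization as the central technical input. For every item, work only from the definition of regular category: finite limits, coequalizers of kernel pairs, and pullback-stable regular epis.

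For (1), first show that every regular epimorphism is strong; this holds in any category. Suppose $f$ is the coequalizer of $u,v$, and we have a commutative square $m a = b f$ with $m$ a monomorphism. Then $m a u = b f u = b f v = m a v$, and since $m$ is monic, $a u = a v$. So $a$ factors uniquely through $f$, and this factorization is the diagonal filler.

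For the converse, take a strong epi $f : c \to d$ and form its kernel pair $k_1,k_2$. Let $e : c \to q$ be their coequalizer, and let $i : q \to d$ be the induced map with $f = i e$. The key step is that $i$ is a monomorphism. I expect this to be the \textbf{main obstacle}, and it is where pullback-stability is used. I would argue as follows.
\begin{itemize}
\item Form the kernel pair of $i$.
\item Pull back $e \times e$ along it. Since regular epis are pullback-stable, and a product of regular epis is regular epi (it is a composite of two pullbacks of regular epis), this map is regular epi.
\item Maps out of the pullback that agree after composing with this regular epi are then equal. This forces the two projections of the kernel pair of $i$ to agree, so $i$ is monic.
\end{itemize}
Once $i$ is monic, the strong epi property applied to the square $i e = \mathrm{id}_d\, f$ gives a diagonal $s$ with $i s = \mathrm{id}_d$. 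A split epi that is monic is an isomorphism, so $f \cong e$ is regular. Closure under composition is a routine lifting argument for strong epis: lift twice. Similarly, if $gf$ is strong then $g$ is strong: given a square for $g$, precompose it with $f$ to get a square for $gf$, lift there, and check via the monomorphism that the lift works for $g$.

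For (2), suppose $f$ coequalizes $u,v$ and is monic. Then $fu=fv$ gives $u=v$, so $\mathrm{id}$ is also a coequalizer of $u,v$, and the comparison map shows $f$ is an isomorphism.

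For (3), write $f = i_f e_f$. By (1), since $f$ is regular epi (hence strong), $i_f$ is strong epi, hence regular epi. Since $i_f$ is also monic, (2) shows it is an isomorphism.

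For (4), apply the functoriality of the image factorization to the commutative square with top arrow $gf$, bottom arrow $g$, left arrow $f$ and right arrow $\mathrm{id}$. Uniqueness follows because $i_g$ is monic.

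For (5), limits and colimits in $\cat{E}^{\cat{J}}$ are computed pointwise, so finite completeness and coequalizers of kernel pairs pass up directly. It remains to see that a morphism of $\cat{E}^{\cat{J}}$ is regular epi if and only if it is pointwise regular epi.
\begin{itemize}
\item If it is regular epi in $\cat{E}^{\cat{J}}$, it is the coequalizer of its own kernel pair (a general fact), and both the kernel pair and the coequalizer are computed pointwise. So each component is a coequalizer in $\cat{E}$.
\item If it is pointwise regular epi, then each component coequalizes its pointwise kernel pair, so the morphism is the coequalizer of its kernel pair in $\cat{E}^{\cat{J}}$.
\end{itemize}
Pullback-stability then holds componentwise.
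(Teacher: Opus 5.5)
The paper gives no argument of its own for this lemma; it defers all five items to Gran's notes. Your proposal supplies the standard self-contained proof from that literature, and it is correct:
factor $f = ie$ through the coequalizer of its kernel pair, use pullback-stability to show $i$ is monic, then apply strong-epi lifting to that factorization.

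Items (2)--(5) are handled correctly. This includes the pointwise characterization of regular epis in $\cat{E}^{\cat{J}}$, which rightly uses only coequalizers of kernel pairs. Writing it out buys something the citation does not: your monicity step never uses that $f$ is strong. So it proves existence of the (regular epi, mono) factorization for every morphism, which the paper asserts without proof when it defines $\Im$.

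One justification needs repair. You argue that $e\times e$ is a regular epi because it is a composite of two pullbacks of $e$. But closure of regular epis under composition is part of what (1) is establishing, so as written this is circular.

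The fix is easy, since you only need an epimorphism:
\begin{enumerate}
\item Pull back $e\times 1_q$ along $\langle p_1,p_2\rangle : q\times_d q \to q\times q$.
\item Pull back $1_c\times e$ along the resulting map into $c\times q$.
\end{enumerate}
Each stage is a pullback of $e$, hence a regular epi, so the composite $r$ is an epi.

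You should also state explicitly that the resulting object is the kernel pair $c\times_d c$ of $f$. This holds because $\langle p_1,p_2\rangle$ is monic and $ie=f$. It is exactly what gives $p_1 r = e k_1 = e k_2 = p_2 r$, and hence $p_1 = p_2$.

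Similarly, in the regular $\Rightarrow$ strong direction, check the lower triangle $mh = b$ of the filler. It follows from $mhf = ma = bf$ together with $f$ being epi.
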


\body{
Now let us continue our discussion of constructing the image diagram. From Lemma \ref{lem regular epi properties}.(5), we see that $\cat{E}^{\cat{J}}$ comes equipped with a functorial factorization
\begin{equation*}
    \text{fact} : \text{Fun}(\ncat{2} \times \cat{J}, \cat{E}) \to \text{Fun}(\ncat{3} \times \cat{J}, \cat{E}).
\end{equation*}

Now if $\cat{E}$ has $\cat{J}$-shaped limits, then by choosing for every diagram a limit object, we obtain the functors
\begin{equation*}
 \cat{E}^{\cat{J}} \xrightarrow{\lim} \cat{E} \xrightarrow{\Delta} \cat{E}^{\cat{J}}
\end{equation*}
whose composite is the monad $\Delta \lim$ coming from the adjunction $\Delta \dashv \lim$. The counit $\varepsilon : \Delta \lim \to 1_{\cat{E}^\cat{J}}$, whose components are the limit cones $\lambda : \Delta(\lim d) \to d$ defines an object in $\text{Fun}(\ncat{2} \times \cat{E}^{\cat{J}}, \cat{E}^{\cat{J}})$. Hence if $d : \cat{J} \to \cat{E}$ is a diagram, then the composition functor
\begin{equation*}
\text{Fun}(\ncat{2} \times \cat{E}^\cat{J}, \cat{E}^\cat{J}) \times \cat{E}^{\cat{J}} \cong \text{Fun}(\cat{E}^{\cat{J}}, \text{Fun}(\ncat{2},\cat{E}^{\cat{J}})) \times \cat{E}^{\cat{J}} \to \text{Fun}(\ncat{2}, \cat{E}^{\cat{J}}) \xrightarrow{\text{fact}} \text{Fun}(\ncat{3}, \cat{E}^{\cat{J}})
\end{equation*}
sends
\begin{equation*}
    (\varepsilon, d) \mapsto \text{fact}(\Delta(\lim d) \xrightarrow{\lambda} d) =    \Delta(\lim d) \overset{e_\lambda}{\twoheadrightarrow} \Im(d) \xhookrightarrow{i_\lambda} d.
\end{equation*}
Now fixing the counit $\varepsilon : \Delta \lim \to 1_{\cat{E}^{\cat{J}}}$, which is equivalent to choosing a limit cone for every $\cat{J}$-diagram, provides us with our desired functor
\begin{equation} \label{eq Im functor def}
    \cat{E}^{\cat{J}} \to \text{Fun}(\ncat{2}, \cat{E}^\cat{J}) \xrightarrow{\text{fact}} \text{Fun}(\ncat{3}, \cat{E}^\cat{J}) \xrightarrow{m} \cat{E}^\cat{J}
\end{equation}
where $m : \text{Fun}(\ncat{3}, \cat{E}^{\cat{J}}) \to \cat{E}^{\cat{J}}$ sends 
\begin{equation*}
 (a \xrightarrow{f} b \xrightarrow{g} c) \mapsto b   
\end{equation*}
We denote this composite functor $\Im : \cat{E}^{\cat{J}} \to \cat{E}^{\cat{J}}$. It takes a diagram $d$ to the diagram $\Im(d)$ determined by the image factorization of the limit cone over $d$. We call this the \textbf{image diagram} of $d$. Furthermore there is a natural monomorphism $i : \Im \hookrightarrow 1_{\cat{E}^{\cat{J}}}$.
}

\begin{lemma}
Given a regular category $\cat{E}$ and limits of shape $\cat{J}$, if $d : \cat{J} \to \cat{E}$ is a diagram with limit cone $\lambda$, then
\begin{equation*}
    \lim \Im(d) \cong \lim d.
\end{equation*}
\end{lemma}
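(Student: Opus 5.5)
We want $\lim \Im(d) \cong \lim d$. Write $L = \lim d$ and factor the limit cone as $\lambda_j = i_j \circ e_j$, where $e_j : L \twoheadrightarrow \Im(d)(j)$ is a regular epimorphism and $i_j : \Im(d)(j) \hookrightarrow d(j)$ is a monomorphism. By construction (\ref{eq Im functor def}), these are the components of natural transformations $e : \Delta L \to \Im(d)$ and $i : \Im(d) \to d$. The plan is to show directly that $e$ is a limit cone over $\Im(d)$, which proves the statement with $L$ as the limit.

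First, $e$ is a cone over $\Im(d)$. This follows from naturality, or concretely: for $f : j \to j'$ we have $i_{j'}\Im(d)(f) e_j = d(f) i_j e_j = d(f)\lambda_j = \lambda_{j'} = i_{j'} e_{j'}$, and since $i_{j'}$ is monic, $\Im(d)(f) e_j = e_{j'}$.

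Next, universality. Let $\mu : \Delta X \to \Im(d)$ be any cone. Then $i \circ \mu$ is a cone over $d$, because $i$ is natural. So there is a unique $u : X \to L$ with $\lambda_j u = i_j \mu_j$ for all $j$. This gives $i_j e_j u = i_j \mu_j$, and cancelling the mono $i_j$ yields $e_j u = \mu_j$. For uniqueness, suppose $u' : X \to L$ also satisfies $e_j u' = \mu_j$ for all $j$. Then $\lambda_j u' = i_j \mu_j$, and the universal property of $\lambda$ forces $u' = u$.

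Hence $(L, e)$ is a limit cone for $\Im(d)$, so $\lim \Im(d) \cong \lim d$, and the comparison map is the canonical one induced by $i$. Only the monicity of the $i_j$ is really used; the regular-epi part of the factorization plays no role here.

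The one point needing care is that the maps $\Im(d)(f)$ are exactly the ones for which $e$ and $i$ are natural. This is guaranteed because the factorization is taken in $\cat{E}^{\cat{J}}$, which is regular by Lemma~\ref{lem regular epi properties}.(5). That is also consistent with the pointwise functorial factorization, by the functoriality square and uniqueness of the diagonal map. I do not expect any real obstacle beyond this bookkeeping.
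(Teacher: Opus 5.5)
Your proof is correct, and it takes a more hands-on route than the paper. The paper applies the functor $\lim$ to the factorization $\Delta(\lim d) \twoheadrightarrow \Im(d) \hookrightarrow d$ to get $f : \lim d \to \lim \Im(d)$ and $g = \lim(i) : \lim \Im(d) \to \lim d$. It notes that $g$ is monic because $\lim$ is a right adjoint and that $gf = 1_{\lim d}$, so $gfg = g$ forces $fg = 1$. You instead check the universal property directly, cancelling the pointwise monos $i_j$ twice: once to see that $e$ is a cone, and once to factor an arbitrary cone $\mu$ through the unique $u$ given by $\lambda$.

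The essential input is the same in both: only the monicity of $i$ matters, and the regular-epi half is idle in the paper's proof as well. The difference is packaging. The paper's version is shorter and purely formal, using only that right adjoints preserve monos and that a monomorphism with a section is an isomorphism. Yours is self-contained and exhibits $e : \Delta(\lim d) \to \Im(d)$ explicitly as a limit cone. That matches how the paper later refers to the maps $\lim d \to (\Im(d))(j)$ as limit-cone maps in Theorem \ref{th commuting rectangle}.

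Your route also avoids a point the paper glosses over. Applying $\lim$ to the constant diagram $\Delta(\lim d)$ gives a power of $\lim d$ indexed by the connected components of $\cat{J}$, not $\lim d$ itself. So the paper's $f$ must be read as the map induced by the cone $e$, i.e.\ $\lim(e)$ precomposed with the unit of $\Delta \dashv \lim$. With that reading, $gf = 1$ holds because $gf$ is induced by the cone $i \circ e = \lambda$. Your argument never needs this bookkeeping.
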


\begin{proof}
From the factorization
\begin{equation*}
    \Delta(\lim d) \twoheadrightarrow \Im(d) \hookrightarrow d
\end{equation*}
applying the limit functor we obtain
\begin{equation*}
    \lim d \xrightarrow{f} \lim \Im(d) \xhookrightarrow{g} \lim d
\end{equation*}
where $g$ is a monomorphism since the lim functor is a right adjoint. We know that the composite of these maps is the identity, i.e. $gf = 1_{\lim d}$. Hence $gfg = g$, which implies that $fg = 1_{\lim \Im(d)}$ since $g$ is a mono.
\end{proof}

\begin{remark}
Similar results were obtained in \cite[Lemma 4.14]{bumpus2025lasso}.
\end{remark}

\begin{theorem} \label{th commuting rectangle}
Let $\cat{E}$ be a regular category with $\cat{J}$-shaped limits and a strict initial object. Given a diagram $d : \cat{J} \to \cat{E}$ with limit cone $\lambda$, the image diagram functor makes the following diagram (Diagram \ref{diagram: the endofunctor commutes}) commute
\begin{equation*}
\begin{tikzcd}
	{\cat{E}} &&& {\ncat{2}} \\
	{\cat{E}^{\cat{J}}} & {\cat{E}^{\cat{J}}} && {\cat{E}^\ncat{2}}
	\arrow["{{- \cong \varnothing}}", from=1-1, to=1-4]
	\arrow[""{name=0, anchor=center, inner sep=0}, "\lim", from=2-1, to=1-1]
	\arrow["{{\Im}}"', dashed, from=2-1, to=2-2]
	\arrow["{{\cat{E}^{(- \cong \varnothing)}}}"', from=2-2, to=2-4]
	\arrow[""{name=1, anchor=center, inner sep=0}, "{{\lim := \land}}"', from=2-4, to=1-4]
	\arrow["{{comm.}}"{description}, draw=none, from=0, to=1]
\end{tikzcd}
\end{equation*}
\end{theorem}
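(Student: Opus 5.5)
We have to show that for every diagram $d : \cat{J} \to \cat{E}$, $\lim d \cong \varnothing$ holds if and only if $\Im(d)(j) \cong \varnothing$ for some $j \in \cat{J}$. Here we read the conjunction $\land$ in $\ncat{2}$ as the minimum, so the bottom-right path outputs $0$ exactly when some component $\Im(d)(j)$ is initial. (If $\cat{J}$ is empty, both sides trivially agree: the limit is terminal and the empty conjunction is $1$, provided $* \not\cong \varnothing$. Otherwise we treat the degenerate case separately.) The plan is to prove the two implications directly, using the factorization $\Delta(\lim d) \overset{e_\lambda}{\twoheadrightarrow} \Im(d) \hookrightarrow d$. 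Componentwise, each $e_{\lambda_j} : \lim d \twoheadrightarrow \Im(d)(j)$ is a regular epimorphism, because regular epis in $\cat{E}^{\cat{J}}$ are computed pointwise.

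\textbf{First direction:} suppose $\lim d \cong \varnothing$ and $\cat{J}$ is nonempty, and fix any $j$. The map $e_{\lambda_j} : \varnothing \to \Im(d)(j)$ is a regular epimorphism. By Lemma \ref{lem strict initial means unique map from initial is mono} it is also a monomorphism. By Lemma \ref{lem regular epi properties}(2) it is therefore an isomorphism, so $\Im(d)(j) \cong \varnothing$. Hence in fact every component is initial, and in particular the conjunction is $0$.

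\textbf{Second direction:} suppose $\Im(d)(j) \cong \varnothing$ for some $j$. The cone component $e_{\lambda_j}$ is then a morphism $\lim d \to \varnothing$, and strictness of the initial object forces $\lim d \cong \varnothing$. One could alternatively invoke the preceding lemma $\lim \Im(d) \cong \lim d$ and note that a limit admitting a projection to $\varnothing$ is initial, but the direct strictness argument is shorter.

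\textbf{Main obstacle:} the proof is essentially bookkeeping. The only real subtlety is justifying that the components of the regular epimorphism $e_\lambda$ in $\cat{E}^{\cat{J}}$ are regular epimorphisms in $\cat{E}$. I would argue this via Lemma \ref{lem regular epi properties}(5) and the pointwise construction of kernel pairs and coequalizers in functor categories: a regular epi is the coequalizer of its kernel pair, and both the kernel pair and its coequalizer are computed objectwise. This is what lets us apply Lemma \ref{lem regular epi properties}(2) in $\cat{E}$ in the first direction. Beyond that, the only thing to check is that the functors $(-\cong\varnothing)$ and $\land$ are interpreted consistently, so that commutativity means exactly the equivalence proved above.
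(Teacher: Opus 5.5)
Your proof is correct and is essentially the paper's argument. In one direction, regular-epi components $\lim d \to \Im(d)(j)$ out of a strict initial object are monic and hence isomorphisms; in the other, strictness forces $\lim d \cong \varnothing$. Your direct strictness argument there, together with your pointwise justification that the components of $e_\lambda$ are regular epimorphisms, actually fills in two steps the paper leaves implicit.

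The only caveat is that the degenerate case $\cat{J} = \varnothing$ with $* \cong \varnothing$ cannot be ``treated separately''. There the top path gives $0$ and the empty meet gives $1$, so that case must be excluded from the statement rather than handled.
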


\begin{proof}
We want to show that the limit of $d$ is an initial object of $\cat{E}$ if and only if there exists some $j$ such that $(\Im(d))(j)$ is initial. If the limit of $d$ is initial, then by Lemma \ref{lem strict initial means unique map from initial is mono} the limit cone maps $\lambda_i : \lim d \to (\Im(d))(j)$ are both monomorphisms and regular epimorphisms, and therefore isomorphisms. 

Conversely, if $(\Im(d))(j) \cong \varnothing$ for some $j \in \cat{J}$, then again the limit cone map $\lambda_j : \lim d \to (\Im (d))(j)$ is both a monomorphism and a regular epimorphism, and hence an isomorphism.
\end{proof}

\begin{corollary} \label{cor im initial somewhere iff initial everywhere}
Given $\cat{E}$ and $d$ as in Theorem \ref{th commuting rectangle}, if $(\Im(d))(j_0) \cong \varnothing$ for any $j_0 \in \cat{J}$ then $(\Im(d))(j) \cong \varnothing$ for all $j \in \cat{J}$.
\end{corollary}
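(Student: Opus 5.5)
The plan is to route everything through the limit object $\lim d$, using Theorem \ref{th commuting rectangle} together with the strictness of the initial object. Suppose $(\Im(d))(j_0) \cong \varnothing$ for some $j_0 \in \cat{J}$. The image factorization of the limit cone supplies a regular epimorphism
\begin{equation*}
e_{\lambda_{j_0}} : \lim d \twoheadrightarrow (\Im(d))(j_0) \cong \varnothing .
\end{equation*}
This is a map into an initial object, so strictness of $\varnothing$ forces $\lim d \cong \varnothing$. Equivalently, one can invoke the ``if'' direction of Theorem \ref{th commuting rectangle}, since the conjunction over $\cat{J}$ of the truth values $(\Im(d))(j) \cong \varnothing$ is then satisfied. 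Either way, the first step yields $\lim d \cong \varnothing$.

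Next, fix an arbitrary $j \in \cat{J}$ and consider the component
\begin{equation*}
e_{\lambda_j} : \lim d \cong \varnothing \twoheadrightarrow (\Im(d))(j).
\end{equation*}
By Lemma \ref{lem strict initial means unique map from initial is mono}, this map out of the initial object is a monomorphism. By construction of the image factorization, it is also a regular epimorphism. Lemma \ref{lem regular epi properties}.(2) then says it is an isomorphism, so $(\Im(d))(j) \cong \lim d \cong \varnothing$. Since $j$ was arbitrary, this proves the claim for every $j \in \cat{J}$.

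The only point that needs care is in the first step, where we use the map $\lim d \to (\Im(d))(j_0)$ directly rather than the map into $d(j_0)$. The reason is that initiality of $\Im(d)(j_0)$ gives no information about $d(j_0)$ itself, so the argument must pass through the image factorization. Beyond that, the proof only uses the standard fact that any morphism into a strict initial object is an isomorphism.
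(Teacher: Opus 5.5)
Your proof is correct and matches the paper's route: the paper reads this corollary off the proof of Theorem \ref{th commuting rectangle}, where initiality of some $(\Im(d))(j_0)$ forces $\lim d \cong \varnothing$, and then each component $\lim d \to (\Im(d))(j)$ is both monic and a regular epimorphism, hence an isomorphism. Of your two justifications for the first step, the strictness argument is the one to rely on; the alternative via the ``conjunction'' being satisfied is only valid if read as the meet in $\ncat{2}$ being $0$, since reading it as ``all $(\Im(d))(j) \cong \varnothing$'' would assume the conclusion.
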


\begin{remark}
A dual of this construction, defined in terms of image factorizations of \textit{colimit} cocones, was given formally in an article by Bumpus, Fairbanks and Turner~\cite{bumpus2025lasso}.
\end{remark}

\begin{remark}
Theorem \ref{th commuting rectangle} gets at one of the main reasons why our algorithms (Corollary \ref{cor inlim on forests solvable in fpt time} and Theorem \ref{thm general shape co-decomp}) are fast.  We check if the limit is empty without ever actually computing the whole limit! Instead we compute the Image diagram, check if it is empty locally, and combine the results.
\end{remark}

\section{Tree-Shaped Diagrams}
In the previous section we considered a general categorical framework to investigate the \textsc{InLim} problem. In this section, we restrict the kinds of diagrams $ d: \cat{J} \to \cat{E}$ under consideration to tree-shaped structured co-decompositions in $\ncat{FinSet}$. For this subclass of diagrams, we construct an algorithm solving the $\textsc{InLim}$ problem which is much faster than the naïve methods mentioned in the previous section.

\begin{definition}\label{def: multi-digraph}
Let $\bullet \rightrightarrows\bullet$ denote the category consisting of two objects and two non-identity morphisms as shown below
\begin{equation*}
   \begin{tikzcd}
	E & V
	\arrow["s", shift left, from=1-1, to=1-2]
	\arrow["t"', shift right, from=1-1, to=1-2]
\end{tikzcd}
\end{equation*}
Let $\ncat{dGr}_m$ denote the category whose objects are functors $G : (\bullet \rightrightarrows \bullet) \to \ncat{FinSet}$ and morphisms are natural transformations. The objects of this category are called finite \textbf{multi-digraphs}, with $E(G), V(G)$ written for $G(E), G(V)$. The elements of $E(G), V(G)$ are called edges and vertices of $G$ respectively. Given an edge $e \in E(G)$, call $s(e) = x$ its source and $t(e) = y$ its target. Then $x$ and $y$ are said to be adjacent, and we write $e : x \to y$. A loop is an edge $e$ such that $s(e) = t(e)$. 

A finite (undirected) \textbf{simple graph} $G$ consists of a finite set $V(G)$ and an irreflexive, symmetric relation $E(G) \subseteq V(G) \times V(G)$. Given $x,y \in V(G)$, write $x \sim_G y$ to mean that $(x,y) \in E(G)$, equivalently $(y,x) \in E(G).$ A morphism $f : G \to G'$ of simple graphs consists of a function $V(f) : V(G) \to V(G')$ such that if $x \sim_G y$ then $f(x) \sim_{G'} f(y)$. Let $\ncat{Gr}$ denote the category of simple graphs.

There is a fully faithful embedding $\iota : \ncat{Gr} \hookrightarrow \ncat{dGr}_m$, defined as follows. Given an undirected simple graph $G$ let $\iota G$ denote the multi-digraph obtained by putting a directed edge in both directions whenever there is an undirected edge between two distinct vertices of $G$. Hence we will silently think of simple graphs as a subclass of multi-digraphs. For more details on these and related categories see \cite[Appendix A]{bumpus2025structured}.
\end{definition}

\begin{definition}\label{def: barycentric subdivision}
Given a multi-digraph $G$, the \textbf{barycentric subdivision} of $G$, denoted $\sint G$, is the category consisting of the following data (c.f. Figure~\ref{fig: barycentric subdivision}): 
\begin{figure}[H]
        \centering
        \begin{equation*}
            \begin{tikzcd}[row sep=tiny]
	&& w &&&&&&& w \\
	&&&&&&&& \textcolor{rgb,255:red,214;green,92;blue,92}{e} && \textcolor{rgb,255:red,214;green,92;blue,92}{f} \\
	x &&&& y & {} & {} & x && \textcolor{rgb,255:red,214;green,92;blue,92}{g} && y \\
	&&&&&&&&&& \textcolor{rgb,255:red,214;green,92;blue,92}{h} \\
	&& z &&&&&&& z
	\arrow["e"', color={rgb,255:red,214;green,92;blue,92}, no head, from=1-3, to=3-1]
	\arrow[from=2-9, to=1-10]
	\arrow[from=2-9, to=3-8]
	\arrow[from=2-11, to=1-10]
	\arrow[from=2-11, to=3-12]
	\arrow["g"', color={rgb,255:red,214;green,92;blue,92}, no head, from=3-1, to=3-5]
	\arrow["f"', color={rgb,255:red,214;green,92;blue,92}, no head, from=3-5, to=1-3]
	\arrow[maps to, from=3-6, to=3-7]
	\arrow[from=3-10, to=3-8]
	\arrow[from=3-10, to=3-12]
	\arrow[from=4-11, to=3-12]
	\arrow[from=4-11, to=5-10]
	\arrow["h"', color={rgb,255:red,214;green,92;blue,92}, no head, from=5-3, to=3-5]
\end{tikzcd}
        \end{equation*}
        \caption{A graph $G$ (left) and its barycentric subdivision $\sint G$ (right).}
        \label{fig: barycentric subdivision}
\end{figure}
The objects of $\sint G$ consist of the vertices and edges of $G$; besides the identity morphisms, for each pair $(e, x)$ where $e$ is an edge and $x$ is a vertex incident with $e$, there exists precisely one morphism, which we denote by $e_x : e \to x$. Thus $\sint G$ is a poset where no nontrivial compositions exist. To suppress some notation, let $\sint G^\op$ denote $(\sint G)^\op$. 
\end{definition}

\begin{remark}
The barycentric subdivision construction above extends to a functor $\sint : \ncat{dGr}_m \to \ncat{Cat}$, more commonly known as the category of elements or Grothendieck construction, see \cite[Definition 1.6.4]{borceux1994handbook}. Note that $\sint G$ does not remember the direction of the edges of $G$.

For the rest of this paper, we will confine ourselves to simple graphs. This has the advantage of making our statements and proofs more elegant. Hence from now on a graph for us will just be a finite simple graph. Similarly (rooted) forests and trees will be considered as (rooted) simple graphs.
\end{remark}

\begin{definition} \label{def structured co-decomposition}
Given a finite simple graph $G$, a diagram in a category $\cat{E}$ of the form $d : \sint G^{\op} \to \cat{E}$ is called a \textbf{structured co-decomposition}\footnote{This terminology originates from \cite{bumpus2025structured}.} of shape $G$ in $\cat{E}$. Given a class $C$ of graphs, we say that the structured co-decomposition $d$ is \textbf{$C$-shaped} if $G$ belongs to $C$. For instance, if $G$ is a tree, then we say that $d$ is tree-shaped.
\end{definition}

\body{
Let us now specialize to the case where $\cat{E} = \ncat{FinSet}$. Suppose we are given a structured co-decomposition $d: \sint G^\op \to \ncat{FinSet}$, we want to characterize its limit. The first thing to note is that $\sint G^\op$ is a finite poset, whose maxima are given by the edges of $G$ and whose minima are given by the vertices of $G$. We can furthermore use the fact that limits of diagrams over finite posets $P$ are determined by the minimal elements of $P$, see \cite[Proposition 2.10]{dey2026limitcomputationposetsminimal}, to obtain the following characterization.
}

\begin{lemma} \label{lem lim characterization}
Given a structured co-decomposition $d : \sint G^\op \to \ncat{FinSet}$, its limit is given by
\begin{equation}
   \lim d \cong \left \{ a \in \prod_{t \in V(G)} d(t) \; : \; d(e_x)(a_x) = d(e_y)(a_y) \text{ for each edge } e = xy \text{ in } G \right \}
\end{equation}
\end{lemma}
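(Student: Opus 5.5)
The plan is to compute the limit from the standard description of limits in $\ncat{FinSet}$ and then simplify it using the shape of $\sint G^\op$. By the equalizer-of-products formula (Paragraph~\ref{para: naive algorithm}), an element of $\lim d$ is a family $(a_j)_{j}$ indexed by all objects of $\sint G^\op$, i.e.\ by vertices and edges of $G$, such that $d(\varphi)(a_j) = a_{j'}$ for every morphism $\varphi : j \to j'$. In $\sint G^\op$ the only non-identity morphisms are the reversed arrows $e_x^\op : x \to e$, one for each vertex $x$ incident with an edge $e$. There are no nontrivial composites, so these are exactly the constraints to check. Writing $d(e_x)$ for $d(e_x^\op) : d(x) \to d(e)$, as the statement does, an element of $\lim d$ is a family $\big((a_t)_{t \in V(G)}, (b_e)_{e \in E(G)}\big)$ with $d(e_x)(a_x) = b_e$ whenever $x$ is an endpoint of $e$.

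Next I would define a map from this set to the set on the right-hand side of the lemma by forgetting the edge components, $\big((a_t),(b_e)\big) \mapsto (a_t)_t$. This map is well defined: if $e = xy$, then $d(e_x)(a_x) = b_e = d(e_y)(a_y)$, so the image satisfies the stated equations. It is injective, because each $b_e$ is forced to equal $d(e_x)(a_x)$ for either endpoint $x$ of $e$. It is surjective: given $a$ in the right-hand set, put $b_e := d(e_x)(a_x)$ for an endpoint $x$ of $e$. This does not depend on the chosen endpoint, precisely because of the condition $d(e_x)(a_x) = d(e_y)(a_y)$. All the limit constraints then hold, since every edge of a simple graph has exactly two distinct endpoints and each is matched.

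Alternatively, one could cite \cite[Proposition 2.10]{dey2026limitcomputationposetsminimal}, which says that the limit is determined by the minimal elements, here the vertices. The direct bijection above is self-contained, though, so I would present that and mention the citation only as a remark.

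The main obstacle is bookkeeping rather than mathematics. One has to get the variance right, since $d$ is contravariant on $\sint G$, so the maps go from vertices to edges. One also has to use simplicity of $G$ to ensure each edge $e = xy$ has exactly the two legs $e_x, e_y$, with $x \neq y$. Finally, $G$ may have isolated vertices or an empty edge set; then the right-hand side is just the full product, consistent with Lemma~\ref{lemma: alg discrete case for sets}.
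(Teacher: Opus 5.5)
Your proof is correct, but it takes a different route from the paper. The paper gives no computation. It notes that $\sint G^\op$ is a finite poset whose minimal elements are the vertices, and then invokes \cite[Proposition 2.10]{dey2026limitcomputationposetsminimal}, the general fact that limits over finite posets are determined by their minimal elements. That is the route you relegate to a remark.

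You instead start from the compatible-families description of limits in $\ncat{FinSet}$. You then show directly that forgetting the edge coordinates is a bijection onto the right-hand side. This uses only that every non-identity arrow of $\sint G^\op$ has the form $x \to e$ and that each edge has exactly its two endpoint legs.

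Your version is self-contained and transparent. It makes explicit that the edge components $b_e$ are redundant data. This is exactly the convention the paper adopts immediately afterwards, when it writes global matching families as $(a_t)_{t \in \sint G^\op}$ with $a_e = a_x|_e = a_y|_e$.

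The citation is more general: it covers diagrams over arbitrary finite posets, where compatibility must be propagated along longer chains and a hand computation is less immediate. The cost is that it outsources the verification, including the check that the resulting compatibility conditions are precisely the edge equations in the lemma. Your bijection supplies that check.
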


\body{
By the above result, an element of $\lim d$ consists of tuples of elements $a = (a_t)_{t \in V(G)}$, with $a_t \in d(t)$, such that for every edge $e = xy$, $a_x|_e = a_y|_e$, where here we are abusing notation and writing $a_x|_e$ for the image of $a_x$ with respect to the function $d(e_x) : d(x) \to d(e)$. We call such a tuple $(a_t)_{t \in V(G)}$ a \textbf{global matching family} for $d$, we may also write this as $(a_t)_{t \in \sint G^\op}$, since $a_e = a_x|_e = a_y|_e$. If $d_0$ is the restriction of $d$ to a subgraph $G_0$ of $G$, then an element of $\lim d_0$ is called a \textbf{local matching family} for $d$. So for $t_0 \in V(G)$, $(\Im(d))(t_0)$ consists of those elements $a \in d(t_0)$ such that there exists a global matching family $(a_t)_{t \in V(G)}$ with $a_{t_0} = a$. If $e = xy \in E(G)$, then $(\Im(d))(e) = \im(d(e_x)) = \im(d(e_y))$.
}

\begin{example}
Suppose $T$ is the graph
\begin{equation*}
    \begin{tikzcd}
	t & {t'} & {t''}
	\arrow[no head, from=1-1, to=1-2]
	\arrow[no head, from=1-2, to=1-3]
\end{tikzcd}
\end{equation*}
with structured co-decomposition $d$ given by the following diagram
\begin{equation*}
   \begin{tikzcd}
	{\{a, b, c\}} & {\{x,y\}} & {\{\alpha, \beta \}} & {\{u,v\}} & {\{r,s\}}
	\arrow["{\substack{a \mapsto x \\ b \mapsto x \\ c \mapsto y}}", from=1-1, to=1-2]
	\arrow["{\substack{\alpha \mapsto x \\ \beta \mapsto y}}"', from=1-3, to=1-2]
	\arrow["{\substack{\alpha \mapsto u \\ \beta \mapsto v}}", from=1-3, to=1-4]
	\arrow["{\substack{r \mapsto v \\ s \mapsto v}}"', from=1-5, to=1-4]
\end{tikzcd} 
\end{equation*}
Then $\{c, \beta, r \}$ and $\{c, \beta, s \}$ are global matching families for $d$, and furthermore are the only global matching families for $d$. Thus $\Im(d)$ is the diagram
\begin{equation*}
    \begin{tikzcd}
	{\{c\}} & {\{y\}} & {\{\beta\}} & {\{v\}} & {\{r,s\}}
	\arrow[from=1-1, to=1-2]
	\arrow[from=1-3, to=1-2]
	\arrow[from=1-3, to=1-4]
	\arrow[from=1-5, to=1-4]
\end{tikzcd}
\end{equation*}
\end{example}

\begin{definition}
Given a regular category $\cat{E}$, a structured co-decomposition $d : \sint G^\op \to \cat{E}$ and an edge $e = xy$ in $G$, consider the pullback
\begin{equation*}
    \begin{tikzcd}
	& {d(x) \times_{d(e)}d(y)} & \\
	{d(x)} & {d(e)} & {d(y)}
	\arrow["{\pi_x}"', from=1-2, to=2-1]
	\arrow["{\pi_e}"', from=1-2, to=2-2]
	\arrow["{\pi_y}", from=1-2, to=2-3]
	\arrow[from=2-1, to=2-2]
	\arrow[from=2-3, to=2-2]
\end{tikzcd}
\end{equation*}
Let $\Filter(d,e) : \sint G^\op \to \cat{E}$ denote a new diagram defined as follows. On objects, let
\begin{equation} \label{eq filter}
\Filter(d,e)(z) = \begin{cases}
   \im(\pi_z), & \text{ if } z = x,y, \\
   \im(\pi_e), & \text{ if }z = e, \\
   d(z), & \text{ else}.
\end{cases}
\end{equation}
If $e'$ is incident to $x$, then define $\Filter(d,e)(e'_x): \Filter(d,e)(x) \to \Filter(d,e)(e')$ to be the composite morphism $\im(\pi_x) \hookrightarrow d(x) \to d(e')$, and similarly for $y$. Otherwise, let $\Filter$ be defined on morphisms the same as $d$. Hence we obtain a sub-diagram $\Filter(d,e) \hookrightarrow d$.
\end{definition}

\begin{lemma} \label{lem im d' = im d}
Given a structured co-decomposition $d : \sint G^\op \to \ncat{FinSet}$ as above, let $d' = \Filter(d,e)$. The diagrams $d$, $d'$ have the same limit $\lim d' = \lim d$, and hence the same image diagram $\Im(d') = \Im(d)$.
\end{lemma}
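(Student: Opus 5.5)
The plan is to use the explicit description of limits from Lemma \ref{lem lim characterization} and show that the two sets of global matching families coincide as subsets of $\prod_{t \in V(G)} d(t)$. Since $d' = \Filter(d,e)$ is a sub-diagram of $d$, there is a natural monomorphism $d' \hookrightarrow d$, hence an injection $\lim d' \hookrightarrow \lim d$. We will view both limits as sets of tuples $(a_t)_{t\in V(G)}$ and check the two inclusions.

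For $\lim d' \subseteq \lim d$: a global matching family for $d'$ has $a_t \in d'(t) \subseteq d(t)$ for every vertex $t$. For any edge $e'$ incident to $x$ or $y$, the restriction maps of $d'$ are the composites $\im(\pi_x) \hookrightarrow d(x) \to d(e')$, and all other maps agree with those of $d$. For the edge $e$ itself, the map $\im(\pi_x) \to \im(\pi_e)$ is the restriction of $d(e_x)$. Hence every matching condition for $d'$ is exactly the corresponding matching condition for $d$ on the same elements. So the tuple is a global matching family for $d$.

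For $\lim d \subseteq \lim d'$: take a global matching family $(a_t)$ for $d$. At vertices other than $x,y$ there is nothing to check, since $d'(t) = d(t)$. Because $a_x|_e = a_y|_e$, the pair $(a_x,a_y)$ is an element of the pullback $d(x)\times_{d(e)} d(y)$. Therefore $a_x = \pi_x(a_x,a_y) \in \im(\pi_x)$, likewise $a_y \in \im(\pi_y)$, and $a_e = \pi_e(a_x,a_y) \in \im(\pi_e)$. The matching conditions for $d'$ are those of $d$ restricted to these subsets, as noted above, so $(a_t)$ lies in $\lim d'$. This shows that the injection $\lim d' \hookrightarrow \lim d$ is a bijection. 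Equivalently, the limit cone of $d$ factors through $d' \hookrightarrow d$ and exhibits $\lim d$ as a limit of $d'$.

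Finally, we will deduce $\Im(d') = \Im(d)$. For any $t$, $\Im(d')(t)$ is the image of the cone map $\lim d' \to d'(t)$, and $\Im(d)(t)$ is the image of $\lim d \to d(t)$. Under the identification above, these cone maps are both the projection $(a_s) \mapsto a_t$, followed by the inclusion $d'(t)\subseteq d(t)$ in the second case. They therefore have the same image as subsets of $d(t)$, compatibly with the restriction maps, which gives $\Im(d') = \Im(d)$ as sub-diagrams of $d$.

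The main obstacle is bookkeeping rather than mathematics. One has to check that the morphisms of $\Filter(d,e)$ at $e$ and at the other edges $e'$ incident to $x$ or $y$ really are restrictions of the corresponding maps of $d$, so that the matching conditions are literally the same. We will handle this directly from the definition of $\Filter$ given above.
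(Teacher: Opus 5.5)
Your proof is correct and follows the paper's argument. The paper gets $\lim d' \subseteq \lim d$ from the monomorphism $d' \hookrightarrow d$, and the reverse inclusion from the fact that any global matching family satisfies $a_x|_e = a_y|_e$, which forces $a_x \in \im(\pi_x)$ and $a_y \in \im(\pi_y)$. You additionally check the matching conditions elementwise and spell out why equal limits give $\Im(d') = \Im(d)$ as sub-diagrams of $d$, a step the paper leaves implicit.
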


\begin{proof}
First note that there is a monomorphism $d' \hookrightarrow d$ of diagrams, and hence $\lim d' \subseteq \lim d$, since $\lim : \ncat{FinSet}^{\sint T^\op} \to \ncat{FinSet}$ is a right adjoint, so it preserves monomorphisms.

Now if $(a_t)_{t \in V(T)}$ is a matching family for $d$, then using Lemma \ref{lem lim characterization}, it must already be the case that $a_x|_e = a_y|_e$, and hence $a_x \in d'(x) = \im(\pi_x)$, $a_y \in d'(y) = \im(\pi_y)$, and thus $\lim d \subseteq \lim d'$.
\end{proof}

\begin{definition}
Let $G$ be a finite simple graph, and $e = xy$ a cut edge in $G$. In other words, removing $e$ from $G$ (but keeping $x$ and $y$ as vertices) results in a disconnected graph, with components $G_1$ and $G_2$. Given diagrams $d_1 : \sint G_1^\op \to \cat{E}$, $d_2 : \sint G_2^\op \to \cat{E}$, and morphisms $\delta_1 : d_1(x) \to d(e)$, $\delta_2 : d_2(y) \to d(e)$, let $\Glue(d_1, \delta_1, d_2, \delta_2) : \sint G^\op \to \cat{E}$ denote the diagram defined on objects by
\begin{equation} \label{eq glue}
    \Glue(d_1, \delta_1, d_2, \delta_2)(z) = \begin{cases} d_1(z), & \text{if }z \in \sint G_1, \\
    d(e), & \text{if }z = e, \\
    d_2(z), & \text{if }z \in \sint G_2. 
    \end{cases}
\end{equation}
On morphisms $\Glue(d_1, \delta_1, d_2, \delta_2)$ is defined by $d_1$ and $d_2$, except for the image of the morphisms $e_x : e \to x$ and $e_y : e \to y$, which is given by $\Glue(d_1,\delta_1,d_2, \delta_2)(e_x) = \delta_1$ and $\Glue(d_1, \delta_1, d_2, \delta_2)(e_y) = \delta_2$. When $\delta_1$ and $\delta_2$ are understood, we will write this as $\Glue(d_1,d_2)$.
\end{definition}

\body{ \label{par tree-shaped str decomp}
Let $d : \sint T^\op \to \ncat{FinSet}$ be a tree-shaped structured co-decomposition, and fix an edge $e = xy$ in $T$. Let 
\begin{equation*}
    d(x) \to d(e) \leftarrow d(y)
\end{equation*}
be the corresponding cospan in $\ncat{FinSet}$. Removal of $e$ from $T$ results in two subtrees $T_1$ and $T_2$ of $T$, with corresponding structured co-decompositions $d_1 : \sint T_1^\op \to \ncat{FinSet}$, $d_2 : \sint T_2^\op \to \ncat{FinSet}$. The next result shows under what conditions we can recover $\Im(d)$ from $\Filter$ and $\Glue$.
}

\begin{proposition} \label{prop glue then filter is image}    
Let $d$, $d_1$ and $d_2$ be defined as above. Define composite maps $\delta_1 : (\Im(d_1))(x) \hookrightarrow d(x) \to d(e)$ and $\delta_2 : (\Im(d_2))(y) \hookrightarrow d(y) \to d(e)$. Consider the following commutative diagram
\begin{equation} \label{eq filter glue diagram}
\begin{tikzcd}
	&& {d(x)\times_{d(e)} d(y)} && \\
	& {\text{im}(\pi_x)} && {\text{im}(\pi_y)} \\
	{(\Im(d_1))(x)} & {d(x)} & {d(e)} & {d(y)} & {(\Im(d_2))(y)}
	\arrow[from=1-3, to=2-2]
	\arrow[from=1-3, to=2-4]
	\arrow["{{\pi_{x}}}", from=1-3, to=3-2]
	\arrow["{{\pi_{y}}}"', from=1-3, to=3-4]
	\arrow[hook, from=2-2, to=3-2]
	\arrow[hook', from=2-4, to=3-4]
	\arrow[hook, from=3-1, to=3-2]
	\arrow["{\delta_1}"', curve={height=12pt}, from=3-1, to=3-3]
	\arrow[from=3-2, to=3-3]
	\arrow[from=3-4, to=3-3]
	\arrow["{\delta_2}", curve={height=-12pt}, from=3-5, to=3-3]
	\arrow[hook', from=3-5, to=3-4]
\end{tikzcd}
\end{equation}
If $(\Im(d_1))(x) \subseteq \im(\pi_{x})$ and $(\Im(d_2))(y) \subseteq \im(\pi_{y})$ then 
\begin{equation*}
    \Im(d) = \Filter(\Glue(\Im(d_1), \Im(d_2)), e)
\end{equation*}
as sub-objects of $d$.
\end{proposition}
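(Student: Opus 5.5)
We will compare the two sub-objects of $d$ pointwise, using the explicit description of limits from Lemma \ref{lem lim characterization}. Write $D = \Glue(\Im(d_1), \Im(d_2))$, glued along $\delta_1, \delta_2$, and $D' = \Filter(D, e)$. By construction $D'$ is a sub-diagram of $D$, and $D$ is a sub-diagram of $d$, since $\Im(d_i) \hookrightarrow d_i$ and the maps of $D$ are restrictions of those of $d$. It therefore suffices to show $\Im(d)(t) = D'(t)$ as subsets of $d(t)$ for every object $t$ of $\sint T^\op$.

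\textbf{Step 1: $\Im(d) \subseteq D'$.} Let $(a_t)$ be a global matching family for $d$. Restricting it to $T_1$ and $T_2$ gives matching families for $d_1$ and $d_2$, so $a_t \in \Im(d_i)(t)$ for $t$ in $T_i$. In particular $a_x \in \Im(d_1)(x)$ and $a_y \in \Im(d_2)(y)$. Since $a_x|_e = a_y|_e$, the pair $(a_x, a_y)$ lies in the pullback $\Im(d_1)(x) \times_{d(e)} \Im(d_2)(y)$ of the cospan $\delta_1, \delta_2$. Filtering $D$ at $e$ uses exactly this pullback, so $a_x, a_y$ and $a_e$ lie in $D'$ at $x$, $y$ and $e$. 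At all other objects $D' = D$, so $a_t \in D'(t)$ for every $t$.

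\textbf{Step 2: $D' \subseteq \Im(d)$.} Let $t$ be an object of $\sint T_1^\op$ and $a \in D'(t)$; the case of $T_2$ is symmetric, and the case $t = e$ follows from the vertex cases. Because $D'(t) \subseteq \Im(d_1)(t)$, there is a matching family $(a_s)_{s \in T_1}$ for $d_1$ with $a_t = a$. We need a matching family $(b_s)_{s \in T_2}$ for $d_2$ with $b_y|_e = a_x|_e$. Gluing the two families then gives a global matching family for $d$, because $e$ is the only edge joining $T_1$ and $T_2$, and this shows $a \in \Im(d)(t)$. Equivalently, we need some $b_y \in \Im(d_2)(y)$ with $\delta_2(b_y) = \delta_1(a_x)$.

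\begin{itemize}
\item When $t = x$, this follows directly from the definition of $D'(x)$ as the image of the projection out of $\Im(d_1)(x) \times_{d(e)} \Im(d_2)(y)$.
\item When $t \neq x, y$, we have $D'(t) = \Im(d_1)(t)$, so $a_x$ is merely some element of $\Im(d_1)(x)$. This is where the hypothesis $\Im(d_1)(x) \subseteq \im(\pi_x)$ must be used.
\end{itemize}

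\textbf{Main obstacle.} The hypothesis, read with $\pi_x$ the projection out of $d(x) \times_{d(e)} d(y)$, only provides $b \in d(y)$ with $b|_e = a_x|_e$. It does not provide such a $b$ inside $\Im(d_2)(y)$, which is what the extension over $T_2$ requires. I would first try to upgrade the partner $b$ to an element of $\Im(d_2)(y)$ using the symmetric hypothesis $\Im(d_2)(y) \subseteq \im(\pi_y)$, but I do not see how that argument goes through. I suspect it genuinely cannot with the hypothesis as literally stated. A potential counterexample is an $a_x$ whose only $d(y)$-partners over $e$ fail to extend across $T_2$. In that case an element of $\Im(d_1)(t)$ at a vertex $t$ far from $x$ would lie in $D'(t)$ but not in $\Im(d)(t)$, since $\Filter$ leaves $D$ unchanged away from $x$, $y$ and $e$.

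So the real work is to check whether this step holds. Under the reading that every element of $\Im(d_1)(x)$ admits a $\delta_2$-partner in $\Im(d_2)(y)$, and symmetrically for $y$, Step 2 closes and the proposition follows. With that reading $\Filter$ is the identity at $x$, $y$ and $e$, though it may still cut $D(e) = d(e)$ down to the common image. I would make explicit that this partner condition is exactly what the hypothesis must guarantee before finishing the argument.
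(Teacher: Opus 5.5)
Your Step 1 and your handling of Step 2 at $t \in \{x, y, e\}$ are correct. Your diagnosis of the remaining case is also right: as literally stated, the proposition is false, so the step you could not close cannot be closed.

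The paper's proof takes the same element-chasing route as yours and gets past your ``main obstacle'' with an invalid inference. From $a_x \in \Im(d_1)(x) \subseteq \im(\pi_x)$ it obtains $b \in d(y)$ with $a_x|_e = b|_e$. It then asserts that, since $\Im(d_2)(y) \subseteq \im(\pi_y)$, this $b$ extends to a matching family on $T_2$. That requires $b \in \Im(d_2)(y)$. This would follow from the reverse inclusion $\im(\pi_y) \subseteq \Im(d_2)(y)$, because $(a_x,b)$ lies in the pullback. It does not follow from the inclusion actually assumed.

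Your suspected counterexample exists. Take the path $w - x - y - z$ with edges $g = wx$, $e = xy$, $f = yz$, so $T_1 = w - x$ and $T_2 = y - z$. Set
$d(w)=\{u,u'\}$, $d(g)=\{1,2\}$, $d(x)=\{a,a'\}$, $d(e)=\{p,q\}$, $d(y)=\{b,b'\}$, $d(f)=\{r,s\}$, $d(z)=\{c\}$.
The maps are:
in $d(g)$, $u,a \mapsto 1$ and $u',a' \mapsto 2$;
in $d(e)$, $a,b \mapsto p$ and $a',b' \mapsto q$;
in $d(f)$, $b \mapsto r$ and $b',c \mapsto s$.

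Then $d(x)\times_{d(e)} d(y) = \{(a,b),(a',b')\}$, so $\im(\pi_x) = d(x)$ and $\im(\pi_y) = d(y)$. Both hypotheses therefore hold trivially.

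Here $\Im(d_1) = d_1$, while $\Im(d_2)(y) = \{b'\}$. The only global matching family is $(u',a',b',c)$, so $\Im(d)(w) = \{u'\}$. But $\Filter$ does not touch $w$, so $\Filter(\Glue(\Im(d_1),\Im(d_2)),e)(w) = \Im(d_1)(w) = \{u,u'\}$.

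Your strengthened hypothesis is the right repair. It says that every element of $\Im(d_1)(x)$ has a $\delta_2$-partner in $\Im(d_2)(y)$, and vice versa. Under it, your two steps form a complete proof.

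This stronger condition is not what Proposition \ref{prop im d by glue then filter} supplies, however. In the example above $\Filter(d,e) = d$, so the same data also refute that proposition. Likewise, Algorithm \ref{algo image}, if it splits first at $e$, returns $\{u,u'\}$ rather than $\Im(d)(w)$ at $w$.
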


\begin{proof}
Let $d' = \Filter(\Glue(\Im(d_1), \Im(d_2)), e)$. WLOG suppose that $t_0 \in \sint T_1$. If $c \in d'(t_0)$, then by the definition of $d'$, there exists a matching family $(a_t)_{t \in \sint T_1}$ such that $a_{t_0} = c$. Now if $(\Im(d_1))(x) \subseteq \im(\pi_{x})$, then $a_x \in \im(\pi_{x})$, which means that there exists a $b \in d(y)$ such that $a_x|_e = b|_e$. But since $(\Im(d_2))(y) \subseteq \im(\pi_y)$, there exists a matching family $(b_t)_{t \in \sint T_2}$ such that $b_y = b$. However, this is precisely the datum needed to define a global matching family $(c_t)_{t \in \sint T}$. In other words, given $(\Im(d_1))(x) \subseteq \im(\pi_x)$ and $(\Im(d_2))(y) \subseteq \im(\pi_y)$, each element $c \in d'(t_0)$ must arise from a global matching family $(c_t)_{t \in \sint T}$, and hence $c \in (\Im(d))(t_0)$. Thus $d'(t_0) \subseteq (\Im(d))(t_0)$. The same argument proves this for when $t_0 \in \sint  T_2$. If $t_0 = e$, then by definition $c \in (\Im(d))(e)$ if and only if $c \in d'(e)$.

Conversely, given $c \in (\Im(d))(t_0)$, there exists a global matching family $(c_t)_{t \in \sint T}$ such that $c_{t_0} = c$. Then restricting this matching family to $T_1$ and $T_2$ provides the data for an element of $d'(t_0)$. Hence $(\Im(d))(t_0) \subseteq d'(t_0)$.
\end{proof}

\begin{proposition} \label{prop im d by glue then filter}
Given a tree-shaped structured co-decomposition $d : \sint T^\op \to \ncat{FinSet}$ and an edge $e = xy$ in $T$, with subtrees $T_1$, $T_2$ obtained from $T$ by removing $e$. Let $d' = \Filter(d,e)$ and $d'_1$, $d'_2$ denote the restriction of $d'$ to $T_1$, $T_2$ respectively. Then
\begin{equation*}
    \Im(d) = \Filter(\Glue(\Im(d'_1), \Im(d'_2))).
\end{equation*}
\end{proposition}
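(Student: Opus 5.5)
The strategy is to reduce to Proposition \ref{prop glue then filter is image} applied to $d'$ instead of $d$. Lemma \ref{lem im d' = im d} gives $\Im(d') = \Im(d)$, so it suffices to prove $\Im(d') = \Filter(\Glue(\Im(d'_1), \Im(d'_2)), e)$, where the gluing maps are $\delta_1 : (\Im(d'_1))(x) \hookrightarrow d'(x) \to d'(e)$ and $\delta_2$ is defined similarly. Removing $e$ from $T$ yields exactly $T_1$ and $T_2$. The restrictions of $d'$ to these subtrees are $d'_1$ and $d'_2$, and $d'$ agrees with $d$ everywhere except at $x$, $y$ and $e$. Hence $d'$ is itself a tree-shaped structured co-decomposition to which the setup of Paragraph \ref{par tree-shaped str decomp} applies.

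The key step is to verify the hypotheses of Proposition \ref{prop glue then filter is image} for $d'$. We need $(\Im(d'_1))(x) \subseteq \im(\pi'_x)$ and $(\Im(d'_2))(y) \subseteq \im(\pi'_y)$, where $\pi'_x, \pi'_y$ are the projections from $d'(x) \times_{d'(e)} d'(y)$. We have $(\Im(d'_1))(x) \subseteq d'_1(x) = d'(x) = \im(\pi_x)$, so it is enough to show that $\Filter$ is idempotent at $e$, namely $\im(\pi'_x) = \im(\pi_x)$ and likewise for $y$. Take $a \in \im(\pi_x)$. Then there is $b \in d(y)$ with $a|_e = b|_e$. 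This $b$ lies in $\im(\pi_y) = d'(y)$, and the common value $a|_e$ lies in $\im(\pi_e) = d'(e)$. So $(a,b)$ is an element of $d'(x) \times_{d'(e)} d'(y)$, which gives $a \in \im(\pi'_x)$. The reverse inclusion holds because $d'(x) \subseteq d(x)$ and $d'(y) \subseteq d(y)$. This confirms the hypotheses.

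Applying Proposition \ref{prop glue then filter is image} now gives $\Im(d') = \Filter(\Glue(\Im(d'_1), \Im(d'_2)), e)$ as subobjects of $d'$, and therefore as subobjects of $d$. Combined with $\Im(d) = \Im(d')$, this yields the claim.

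The step that needs the most care is bookkeeping rather than mathematics. The $\Filter$ on the right-hand side is taken with respect to the pullback of $d'$, whereas the statement may intend the pullback of $d$ or of the glued diagram. I will note that both readings give the same result: by the idempotence computation above, filtering by $d$'s or by $d'$'s pullback at $e$ produces the same subsets. The glued diagram's values at $x$, $y$, $e$ are the relevant images, and any further filtering restricts to the same images because the result equals $\Im(d)$.
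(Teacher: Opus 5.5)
Your argument follows the paper's own proof step for step. You verify the hypotheses of Proposition \ref{prop glue then filter is image} for $d'$ via $\im(\pi'_x)=\im(\pi_x)$ and $\im(\pi'_y)=\im(\pi_y)$; your check of this is correct, and cleaner than the paper's. You then apply that proposition and finish with Lemma \ref{lem im d' = im d}.

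The problem is that this route cannot succeed: Proposition \ref{prop glue then filter is image} is false, and so is the statement you are proving. In your setting its hypotheses hold automatically, since $(\Im(d'_1))(x)\subseteq d'(x)=\im(\pi'_x)$, so they carry no information. The failing step is in the proof of Proposition \ref{prop glue then filter is image}. From $a_x\in\im(\pi_x)$ one gets some $b\in d(y)$ with $a_x|_e=b|_e$. But the inclusion $(\Im(d_2))(y)\subseteq\im(\pi_y)$ does not put $b$ in $(\Im(d_2))(y)$, so $b$ need not extend to a matching family on $T_2$. Since $\Filter(-,e)$ only changes the values at $x$, $e$, $y$, an element at a vertex deeper in $T_1$ is never pruned, even when all its compatible values at $x$ are deleted by the final filter.

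Here is a concrete counterexample. Let $T$ be the path with vertices $u,x,y,v$ and edges $g=ux$, $e=xy$, $f=yv$. Set $d(u)=\{p\}$, $d(x)=\{a\}$, $d(y)=\{b\}$, $d(v)=\{c\}$, $d(g)=d(e)=\{0\}$ and $d(f)=\{0,1\}$, with $b\mapsto 0$ and $c\mapsto 1$ into $d(f)$.
\begin{itemize}
\item Since $b$ and $c$ disagree on $f$, $\lim d=\varnothing$, so $\Im(d)$ is empty at every object.
\item The pullback at $e$ is $\{(a,b)\}$, so $d'=\Filter(d,e)=d$.
\item $\Im(d'_1)=d'_1$ because $\lim d'_1=\{(p,a)\}$, while $\Im(d'_2)$ is empty everywhere.
\item Gluing and filtering at $e$ empties $x$, $e$, $y$ but leaves $\{p\}$ at $u$. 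So the right-hand side differs from $\Im(d)$ at $u$.
\end{itemize}
Indeed, Algorithm \ref{algo image} with this choice of $e$, followed by Algorithm \ref{algo forestinitial} inspecting $t=u$, answers \textsc{No} although $\lim d=\varnothing$.

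What survives is only $\Im(d)\subseteq\Filter(\Glue(\Im(d'_1),\Im(d'_2)),e)\subseteq d$, and hence equality of limits. That is too weak for the one-vertex test justified by Corollary \ref{cor im initial somewhere iff initial everywhere}. A correct version needs information to flow back from $e$ into the interiors of $T_1$ and $T_2$, for example a second, top-down round of filters as in Yannakakis' algorithm for acyclic joins. For the decision problem alone, a single leaves-to-root pass with the test made at the root also suffices.

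Separately, your closing paragraph is circular. By definition, $\Filter(g,e)$ uses the glued diagram's own pullback $(\Im(d'_1))(x)\times_{d(e)}(\Im(d'_2))(y)$. This is not the same as filtering by the pullback of $d$ or $d'$: in the example the former is empty and the latter is not. You justify their agreement by appealing to the very conclusion being proved.
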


\begin{proof}
Consider the pullback
\begin{equation*}
    \begin{tikzcd}
	{d'(x) \times_{d'(e)} d'(y)} & {d'(y)} \\
	{d'(x)} & {d'(e)}
	\arrow["{\pi'_y}", from=1-1, to=1-2]
	\arrow["{\pi'_x}"', from=1-1, to=2-1]
	\arrow[from=1-2, to=2-2]
	\arrow[from=2-1, to=2-2]
\end{tikzcd}
\end{equation*}
The set $d'(x) \times_{d'(e)} d'(y)$ consists of those elements $(a,b)$ such that $a|_e = b|_e$. But $d'(x) = \im(\pi_x)$ and $d'(y) = \im(\pi_y)$, so
\begin{equation*}
    d'(x) \times_{d'(e)} d'(y) \cong d(x) \times_{d(e)} d(y),
\end{equation*}
hence $\im(\pi_x) = \im(\pi'_x)$ and $\im(\pi_y) = \im(\pi_y)$. 

This implies that $(\Im(d_1'))(x) \subseteq \im(\pi'_x)$ and $(\Im(d_2'))(y) \subseteq \im(\pi'_y)$. Hence by Proposition \ref{prop glue then filter is image}, $\Im(d') = \Filter(\Glue(\Im(d'_1), \Im(d'_2)))$. But by Lemma \ref{lem im d' = im d}, $\Im(d') = \Im(d)$.
\end{proof}

\body{
Let us now present an FPT-algorithm that computes $\Im(d)$.
}

\begin{algorithm}[H]
\caption{\textsc{Image}}  \label{algo image}
\KwIn{A tree-shaped structured co-decomposition $d : \sint T^\op \to \ncat{FinSet}$.}
\KwOut{The image diagram $\Im(d)$.}

\If{$T$ has no edges}{
\Return $d$;
}

Choose an edge $e = x y$ in $T$;

Set $d' = \text{Filter}(d,e)$;

Let $T_1, T_2$ denote the subtrees obtained from $T$ by removing $e$, and let $d'_1$, $d'_2$ be the restriction of $d'$ to $T_1$, $T_2$ respectively;

Set $d'_1 = \textsc{Image}(d'_1)$;

Set $\delta_1 : \textsc{Image}(d'_1)(x) \hookrightarrow d'(x) \to d'(e)$;

Set $d'_2 = \textsc{Image}(d'_2)$;

Set $\delta_2 : \textsc{Image}(d'_2)(y) \hookrightarrow d'(y) \to d'(e)$;

Set $g = \Glue(d'_1,d'_2).$

\Return $\text{Filter}(g, e)$.
\end{algorithm}

\begin{theorem} \label{th tree algo correctness and running time}
Given a tree-shaped structured co-decomposition $d : \sint T^\op \to \ncat{FinSet}$, Algorithm \ref{algo image} is correct and runs in time
\begin{equation*}
    \Oa\left( \, \left| \max_{t \in V(T)} d(t)^2 \right| \cdot |V(T)| \, \right).
\end{equation*}
\end{theorem}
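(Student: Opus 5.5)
Correctness and running time are handled separately; correctness is by strong induction on the number of edges of $T$, and the running time comes from a recurrence. \textbf{Base case:} if $T$ has no edges, then $\sint T^\op$ is discrete. The limit is then the product, so each limit-cone projection is surjective unless some $d(t)$ is empty. If some $d(t)$ is empty, the limit is empty and the image diagram is empty everywhere. Here we should either observe that returning $d$ is still acceptable for the purpose of deciding $\textsc{InLim}$, via Theorem \ref{th commuting rectangle}, or add the trivial emptiness check. For a single vertex the base case is exactly correct. \textbf{Inductive step:} choose $e = xy$. The subtrees $T_1, T_2$ have strictly fewer edges, so by the inductive hypothesis the recursive calls return $\Im(d'_1)$ and $\Im(d'_2)$, where $d' = \Filter(d,e)$. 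The maps $\delta_1,\delta_2$ are exactly those of Proposition \ref{prop glue then filter is image} applied to $d'$. Proposition \ref{prop im d by glue then filter} then gives $\Filter(\Glue(\Im(d'_1),\Im(d'_2)),e) = \Im(d)$, which is precisely what the algorithm returns. So correctness reduces to matching the algorithm's steps with that proposition, together with Lemma \ref{lem im d' = im d}.

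For the running time, let $w = \max_{t} |d(t)|$. We may assume $|d(e)| \le w$, or else bound edge sets by their images, since filtering makes edge bags images of vertex bags. Computing $\Filter(d,e)$ requires the pullback $d(x)\times_{d(e)} d(y)$, which takes $\Oa(w^2)$ by comparing all pairs $(a,b)$ on their restrictions to $e$, and the images of the three projections, each obtained in one pass over the pullback. Redefining the morphisms at $x$ and $y$ is just composing with inclusions, so it costs $\Oa(w)$ per incident edge. $\Glue$ only reassembles dictionaries and costs $\Oa(1)$ beyond the two new maps. The final $\Filter$ again costs $\Oa(w^2)$. Every edge of $T$ is the chosen edge in exactly one recursive call, and each leaf call costs $\Oa(1)$. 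Since $|E(T)| = |V(T)|-1$, the total is $\Oa(w^2 \cdot |V(T)|)$.

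The main obstacle is the accounting for the restriction and recomposition work at incident edges, and for the copying of diagrams. Copying $d'$ into $d'_1, d'_2$ naively at each level would give $\Oa(n^2)$ on a path. I would first argue that the algorithm operates in place on a single dictionary representation, so restriction to a subtree is implicit. Each vertex's incident-edge maps are recomposed only when one of its incident edges is processed. This gives a charge of $\Oa(w)$ per (vertex, incident edge) pair, i.e. $\Oa(w \sum_t \deg t) = \Oa(w n)$ in total, which is dominated by $w^2 n$. The emptiness propagation in the base case, which relies on Corollary \ref{cor im initial somewhere iff initial everywhere}, is the other subtle point to state carefully.
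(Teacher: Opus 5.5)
Your argument follows the paper's own proof step for step: induction on edges closed off by Proposition~\ref{prop im d by glue then filter}, then one pair of $\Filter$ calls per edge. It therefore inherits a genuine gap. That proposition, and Proposition~\ref{prop glue then filter is image} behind it, are false for an arbitrary choice of $e$.

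The final $\Filter(g,e)$ only shrinks the bags at $x$, $y$ and $e$. So the restriction that one side imposes across $e$ never reaches the other vertices of the opposite subtree. Worse, a recursive call that splits along an edge not incident to $x$ does not even return the correct bag at $x$.

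Here is a concrete counterexample.
\begin{itemize}
\item Take the path $r-q-p-x-y-z$. Every edge bag is $\{0,1\}$ and every vertex-to-edge map is an inclusion. Set $d(r)=\{0\}$, $d(z)=\{1\}$, and let all other vertex bags be $\{0,1\}$.
\item Matching families are constant along the path, so $\lim d=\varnothing$ and $\Im(d)$ is empty everywhere.
\item Let \textsc{Image} choose $e=xy$, and inside $T_1=x-p-q-r$ let it choose $pq$.
\item The call on $x-p$ returns $\{0,1\},\{0,1\}$. The call on $q-r$ returns $\{0\},\{0\}$. The final filter at $pq$ then gives $p=q=r=\{0\}$ but leaves $x=\{0,1\}$.
\item The call on $T_2=y-z$ returns $y=z=\{1\}$, and the final filter at $xy$ gives $x=y=\{1\}$.
\end{itemize}
Every bag of the output is non-empty, and the bags at $x$ and $p$ are not even compatible. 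So \textsc{Image} does not compute $\Im(d)$, and \textsc{ForestInitial} built on it would wrongly answer \textsc{No}.

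The faulty step in the paper's proof of Proposition~\ref{prop glue then filter is image} is the passage from some $b\in d(y)$ with $a_x|_e=b|_e$ to a matching family on $T_2$ through $b$. That would need $\im(\pi_y)\subseteq(\Im(d_2))(y)$, which is the reverse of the stated hypothesis.

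A minimal repair is to always split along a pendant edge $e=xy$ with $y$ a leaf. Write $d'_1$ for $\Filter(d,e)$ restricted to $T_1$.
\begin{itemize}
\item Every matching family for $d'_1$ extends to a global one: pick a partner in $d(y)$ for $a_x\in\im(\pi_x)$, and $y$ has no other constraints.
\item Hence $\lim d\to\lim d'_1$ is surjective, and $\Im(d'_1)$ agrees with $\Im(d)$ on $T_1$.
\item The final $\Filter$ at $e$ then leaves the bag at $x$ unchanged. It cuts $d'(y)$ and $d'(e)$ down to exactly $\Im(d)(y)$ and $\Im(d)(e)$, because $y$ is a leaf.
\end{itemize}
With that restriction your induction is sound. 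Your running-time analysis then gives $\Oa(w^2\,|V(T)|)$, including the in-place representation you rightly insist on (the paper's ``linear in $|V(T)|$ per call'' glosses over this). Your base-case caveat is moot, since a tree with no edges is a single vertex.
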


\begin{proof}
We first show that algorithm \ref{algo image}, $\textsc{Image}$ is correct. We induct on the number of edges of $T$. If $T$ has no edges, then $\lim d = \prod_{t \in V(T)} d(t)$, and $\Im(d) \cong d$. Thus $\textsc{Image}$ correctly computes $\Im(d)$ in the base case.

Now suppose that $\textsc{Image}$ computes $\Im(d)$ correctly on diagrams $d : \sint T_0^\op \to \ncat{FinSet}$ with $T_0$ a tree with at most $k$ edges. Suppose that $T$ has $k+1$ edges. Choose an edge $e = xy$ and let $d' = \text{Filter}(d,e)$ to denote the filtered diagram of $d$ on $T$. Hence $d'(x) = \im(\pi_x)$ and $d'(y) = \im(\pi_y)$. Let $T_1$, $T_2$ be the subtrees obtained from $T$ by removing $e$ and let $d_1, d_2$ denote the corresponding sub-diagrams of $d'$. Both trees $T_1$, $T_2$ will have at most $k$ edges. By the induction hypothesis, $\textsc{Image}(d'_1) = \Im(d'_1)$ and $\textsc{Image}(d'_2) = \Im(d'_2)$. Then by Proposition \ref{prop im d by glue then filter}, $\textsc{Image}$ correctly computes $\Im(d)$ on input $d$.

Now let us compute the running time of $\textsc{Image}$. The algorithm $\textsc{Image}$ can run recursively $\Oa(|E(T)|)$ many times. Since $T$ is a tree $|E(T)| = |V(T) - 1|$, hence it runs $\Oa(|V(T)|)$ many times. In each recursive call, the algorithm needs to run $\Filter$ twice, save some variables and $\Glue$. The latter two operations can be done in time linear in $|V(T)|$, hence we need only compute the running time of $\Filter$, but this is easily seen to be $\Oa(|\max_{t \in V(T)} d(t)^2|)$, as the pullback operation must check for each pair $(a,b) \in d(x) \times d(y)$ that $a|_e = b|_e$.
\end{proof}

\begin{corollary} \label{cor inlim on forests solvable in fpt time}
If $d : \sint F^\op \to \ncat{FinSet}$ is a forest-shaped structured co-decomposition, then there exists an algorithm which correctly decides the $\textsc{InLim}(\sint F^\op, \ncat{FinSet})$ problem in time $\Oa\left( \, \left| \max_{t \in V(F)} d(t)^2 \right| \cdot |V(F)| \, \right)$.
In particular, $\textsc{InLim}(\sint F^\op, \ncat{FinSet})$ is FPT-solvable when parameterized by $|\max_{t \in V(F)} d(t)^2|$.
\end{corollary}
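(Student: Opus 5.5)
Since $F$ is a forest, it is a disjoint union of trees $T_1, \dots, T_m$. The diagram $\sint F^\op$ is then the disjoint union of the categories $\sint T_i^\op$: there are no morphisms between objects lying in different components. So $d$ splits into tree-shaped structured co-decompositions $d_i : \sint T_i^\op \to \ncat{FinSet}$. Limits over a coproduct of indexing categories are products of the limits over the pieces, so
\begin{equation*}
\lim d \cong \prod_{i=1}^m \lim d_i .
\end{equation*}
This can also be read off directly from Lemma \ref{lem lim characterization}, because the matching conditions only involve edges, and every edge lies within a single component. A finite product of sets is empty if and only if some factor is empty. Hence $\lim d \cong \varnothing$ if and only if $\lim d_i \cong \varnothing$ for some $i$.

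The algorithm is as follows. First, compute the connected components of $F$ by a graph search, in time $\Oa(|V(F)| + |E(F)|) = \Oa(|V(F)|)$, since a forest has fewer edges than vertices. Second, for each component $T_i$, run Algorithm \ref{algo image} on $d_i$. By Theorem \ref{th tree algo correctness and running time}, this correctly returns $\Im(d_i)$ in time $\Oa(w^2 \cdot |V(T_i)|)$, where $w = \max_{t} |d(t)|$. Third, pick any single vertex $t_i \in V(T_i)$ and check whether $(\Im(d_i))(t_i)$ is empty. By Theorem \ref{th commuting rectangle} together with Corollary \ref{cor im initial somewhere iff initial everywhere} (here $\ncat{FinSet}$ is regular with a strict initial object), $\lim d_i \cong \varnothing$ if and only if $(\Im(d_i))(t_i) \cong \varnothing$. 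Finally, output \textsc{Yes} precisely when some $i$ gives an empty image. Correctness follows from the product decomposition above.

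Summing the cost of the second step over all components gives $\sum_i \Oa(w^2 |V(T_i)|) = \Oa(w^2 |V(F)|)$. The remaining steps are linear in $|V(F)|$, so the total running time is as claimed. Since the dependence on the parameter $w^2$ is bounded and the dependence on the input size is linear, the problem is FPT-solvable.

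The only step needing any care is the second. Theorem \ref{th tree algo correctness and running time} is stated per tree, so the per-component costs must add up to a bound linear in $|V(F)|$. This works because the components partition $V(F)$, so $\sum_i |V(T_i)| = |V(F)|$. There is one small edge case: an isolated vertex is a tree with no edges, and Algorithm \ref{algo image} simply returns $d_i$ for it, which is fine. Everything else in the argument is routine.
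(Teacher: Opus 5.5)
Your proof is correct and follows the paper's argument. You run \textsc{Image} on each component and then test emptiness at a single vertex per component via Corollary \ref{cor im initial somewhere iff initial everywhere}, which is exactly Algorithm \ref{algo forestinitial}; your explicit decomposition $\lim d \cong \prod_i \lim d_i$ is a welcome addition, since it justifies combining the per-component answers more precisely than the paper's remark that the componentwise runs ``produce $\Im(d)$''.
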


\begin{proof}
Given a forest-shaped diagram $d : \sint F \to \ncat{FinSet}$, run $\textsc{Image}$ on each of its components. By Theorem \ref{th tree algo correctness and running time}, this produces $\Im(d)$. All that is left to do is then check if each $\Im(d)(t) \cong \varnothing$. By Corollary \ref{cor im initial somewhere iff initial everywhere}, it is actually sufficient to check this for an arbitrary vertex in each component.
\end{proof}

\body{
For later reference, we formalize the above via the following algorithm.
}

\begin{algorithm}[H]
\caption{\textsc{ForestInitial}}  \label{algo forestinitial}
\KwIn{A forest-shaped structured co-decomposition $d : \sint F^\op \to \ncat{FinSet}$.}
\KwOut{\textsc{Yes} if $\lim d \cong \varnothing$, \textsc{No} otherwise.}

\For{\textnormal{each component} $T$ \textnormal{in} $F$}{
Let $d' = \textsc{Image}(d|_T)$;

Choose a vertex $t \in V(T)$;

\If{$d'(t) \cong \varnothing$}{
\Return \textsc{Yes};
}
}
\Return \textsc{No};
\end{algorithm}

\begin{remark}
For diagrams of the form $d: \sint T^\op \to \ncat{FinSet}$, the limit of $d$ can be computed via iterated pullbacks, see \cite[Proposition A.2.10]{bumpus2025structured} for example. One could therefore devise an algorithm that computes the limit of $d$ via iterated pullback. However Algorithm \ref{algo image} runs significantly faster than such an algorithm. Indeed, given a cospan $A \to B \leftarrow C$, computing the pullback takes $\Oa(|A||C|)$ time and results in a new set of size $|A \times_B C| = \Oa(|A||C|)$. So if given a diagram $d$, letting $w = \max_{t \in V(T)} |d(t)|$, iterating the pullbacks will end up taking $\Oa(w^{|E(T)|})$ time.
\end{remark}

\section{Graph-Shaped Diagrams}

\body{
Here we consider the more general case in which we are given a diagram $d : \sint G^\op \to \ncat{FinSet}$ where $G$ is not necessarily a forest. Notice that applying Algorithm \ref{algo image} to such a diagram will not work in general. For example, let $G = C_4$ be the $4$-cycle shown below-left, and let $d : \sint G^\op \to \ncat{FinSet}$ denote the structured co-decomposition given below-right
\begin{equation}  \label{eq cycle co-decomp}
\begin{tikzcd}
	4 && 3 && {\{a,b\}} & {\{1,2\}} & {\{c,d\}} \\
	&&&& {\{a,b\}} && {\{c,d\}} \\
	1 && 2 && {\{a,b\}} & {\{3,4\}} & {\{c,d\}}
	\arrow[no head, from=1-1, to=3-1]
	\arrow[no head, from=1-3, to=1-1]
	\arrow["\begin{array}{c} {\substack{a \mapsto 1 \\ b \mapsto 2}} \end{array}", from=1-5, to=1-6]
	\arrow[equals, from=1-5, to=2-5]
	\arrow["\begin{array}{c} {\substack{c \mapsto 1 \\ d \mapsto 2}} \end{array}"', from=1-7, to=1-6]
	\arrow[equals, from=1-7, to=2-7]
	\arrow[equals, from=2-5, to=3-5]
	\arrow[equals, from=2-7, to=3-7]
	\arrow[no head, from=3-1, to=3-3]
	\arrow[no head, from=3-3, to=1-3]
	\arrow["\begin{array}{c} {\substack{a \mapsto 3 \\ b \mapsto 4}} \end{array}"', from=3-5, to=3-6]
	\arrow["\begin{array}{c} {\substack{c \mapsto 4 \\ d \mapsto 3}} \end{array}", from=3-7, to=3-6]
\end{tikzcd}
\end{equation}
Applying Algorithm \ref{algo image} naïvely to $d$ would incorrectly return a nonempty diagram $\Im(d)$, since for each edge $e$, $\Filter(d,e) \cong d$. However, it is easy to see that the limit of this diagram is empty: the top cospan requires $a$ to match only with $c$, whereas the bottom cospan requires $a$ to match only with $d$ (similarly for $b$).
}\label{para: challenges of non-tree shapes}

\body{
Overcoming the challenge identified in Paragraph~\ref{para: challenges of non-tree shapes} is practically important because often the diagrams we wish to compute upon are not tree-shaped. Our approach below requires a slightly more involved parameterization. Rather than parameterizing just on the maximum size of the sets involved in the diagram, we will also parameterize on the diagram's feedback vertex number.
}

\begin{definition} \label{def feedback vertex set}
Given a finite simple graph $G$, a \textbf{feedback vertex set} $S$ of $G$ is a subset $S$ of vertices in $G$ whose removal yields a forest. Equivalently, it is a set of vertices which are adjacent to every cycle. The \textbf{feedback vertex number} of $G$ is the size of any minimal feedback vertex set of $G$.
\end{definition}

\body{
Our approach is to reduce the general case to the case of forests via a truth-table-style reduction: given a structured co-decomposition $d : \smallint G \to \ncat{FinSet}$ where $G$ is a finite simple graph, we produce a finite forest $F$ and a finite family of diagrams $(\tau_i : \sint F^\op \to \ncat{FinSet})_{i \in I}$ with the following property: 
\begin{equation}\label{eqn: truth table reduction property}
    \lim(d) \cong \varnothing \text{ if and only if } \lim(\tau_i) \cong \varnothing \text{ for all } i \in I.
\end{equation}
}

\begin{definition}
Given a structured co-decomposition $d : \sint G^\op \to \ncat{FinSet}$, choose a feedback vertex set $S$ of $G$. Let $(d(s))_{s \in S}$ denote the family of sets indexed by the vertices in $S$, and let $F$ denote the forest $G - S$. Now we will construct a family of diagrams of the form $\sint F^\op \to \ncat{FinSet}$, one for each element in $\prod_{s \in S} d(s)$. The construction proceeds as follows:
\begin{enumerate}
    \item For each element $\sigma \in \prod_{s \in S} d(s)$, define a sub-diagram $d_\sigma : \sint G^\op \to \ncat{FinSet}$ of $d$ as follows. The diagram $d_\sigma$ is identical to $d$ at all objects except those indexed by elements of $S$, where we define $d_\sigma(s)$ to be the singleton set $\{\sigma_s\}$ consisting of the component element $\sigma_s \in d(s)$ of the tuple $\sigma$, with maps out of $d_\sigma(s)$ given by restriction.
    \item For each $s \in S$, choose a total order on the set of edges $\text{adj}(s) = \{e_1, \dots, e_n \}$ adjacent to $s$. Set $d^1_\sigma = \text{Filter}(d_\sigma,e_1)$, and given $d^i_\sigma$, $1 \leq i \leq n-1$, set $d^{i+1}_\sigma = \text{Filter}(d^i_\sigma, e_{i+1})$. Let $d'_\sigma = d^n_\sigma$.
    \item Let $\tau_\sigma : \sint F^\op \to \ncat{FinSet}$ denote the subdiagram of $d'_\sigma$ restricted to $\sint F$.
\end{enumerate}
We call the resulting family $(\tau_\sigma)_{\sigma \in \prod_{s \in S} d(s)}$ the \textbf{section test diagrams for $d$ relative to $S$}.
\end{definition}

\begin{remark}
Note that while the construction of $\tau_\sigma$ depends on the order on which we filter the edges of each $s \in S$, ultimately this ordering is irrelevant for our purposes, as will be clear in the proof of Theorem \ref{thm general shape co-decomp}.
\end{remark}

\begin{example}
Let $G = C_4$, and $d : \sint G^\op \to \ncat{FinSet}$ given as in (\ref{eq cycle co-decomp}). Let $S = \{ 1 \}$, and $F = G - S$ be the path $2-3-4$. Now $d(1) = \{a, b \}$, so we have two sub-diagrams $d_a, d_b : \sint G^\op \to \ncat{FinSet}$, given as follows
\begin{equation}
\begin{tikzcd}
	{\{a,b\}} & {\{1,2\}} & {\{c,d\}} && {\{a,b\}} & {\{1,2\}} & {\{c,d\}} \\
	{\{a,b\}} & {d_a} & {\{c,d\}} && {\{a,b\}} & {d_b} & {\{c,d\}} \\
	{\{a\}} & {\{3,4\}} & {\{c,d\}} && {\{b\}} & {\{3,4\}} & {\{c,d\}}
	\arrow["\begin{array}{c} {\substack{a \mapsto 1 \\ b \mapsto 2}} \end{array}", from=1-1, to=1-2]
	\arrow[equals, from=1-1, to=2-1]
	\arrow["\begin{array}{c} {\substack{c \mapsto 1 \\ d \mapsto 2}} \end{array}"', from=1-3, to=1-2]
	\arrow[equals, from=1-3, to=2-3]
	\arrow["\begin{array}{c} {\substack{a \mapsto 1 \\ b \mapsto 2}} \end{array}", from=1-5, to=1-6]
	\arrow[equals, from=1-5, to=2-5]
	\arrow["\begin{array}{c} {\substack{c \mapsto 1 \\ d \mapsto 2}} \end{array}"', from=1-7, to=1-6]
	\arrow[equals, from=1-7, to=2-7]
	\arrow[equals, from=2-3, to=3-3]
	\arrow[equals, from=2-7, to=3-7]
	\arrow[from=3-1, to=2-1]
	\arrow["{{\substack{a \mapsto 3}}}"', from=3-1, to=3-2]
	\arrow["\begin{array}{c} {\substack{c \mapsto 4 \\ d \mapsto 3}} \end{array}", from=3-3, to=3-2]
	\arrow[from=3-5, to=2-5]
	\arrow["{{\substack{b \mapsto 4}}}"', from=3-5, to=3-6]
	\arrow["\begin{array}{c} {\substack{c \mapsto 4 \\ d \mapsto 3}} \end{array}", from=3-7, to=3-6]
\end{tikzcd}    
\end{equation}
Applying Filter to both diagrams for each edge adjacent to $1$, we obtain the diagrams
\begin{equation}
 \begin{tikzcd}
	{\{a\}} & {\{1,2\}} & {\{c,d\}} && {\{b\}} & {\{1,2\}} & {\{c,d\}} \\
	{\{a\}} & {d'_a} & {\{c,d\}} && {\{b\}} & {d'_b} & {\{c,d\}} \\
	{\{a\}} & {\{3\}} & {\{d\}} && {\{b\}} & {\{4\}} & {\{c\}}
	\arrow["{{\substack{a \mapsto 1}}}", from=1-1, to=1-2]
	\arrow[equals, from=1-1, to=2-1]
	\arrow["\begin{array}{c} {\substack{c \mapsto 1 \\ d \mapsto 2}} \end{array}"', from=1-3, to=1-2]
	\arrow[equals, from=1-3, to=2-3]
	\arrow["{{\substack{b \mapsto 2}}}", from=1-5, to=1-6]
	\arrow[equals, from=1-5, to=2-5]
	\arrow["\begin{array}{c} {\substack{c \mapsto 1 \\ d \mapsto 2}} \end{array}"', from=1-7, to=1-6]
	\arrow[equals, from=1-7, to=2-7]
	\arrow[equals, from=3-1, to=2-1]
	\arrow[from=3-1, to=3-2]
	\arrow[from=3-3, to=2-3]
	\arrow[from=3-3, to=3-2]
	\arrow[equals, from=3-5, to=2-5]
	\arrow[from=3-5, to=3-6]
	\arrow[from=3-7, to=2-7]
	\arrow[from=3-7, to=3-6]
\end{tikzcd}
\end{equation}
Finally restricting these diagrams to the forest $2-3-4$ we obtain
\begin{equation}
 \begin{tikzcd}
	{\{a\}} & {\{1,2\}} & {\{c,d\}} && {\{b\}} & {\{1,2\}} & {\{c,d\}} \\
	& {\tau_a} & {\{c,d\}} &&& {\tau_b} & {\{c,d\}} \\
	&& {\{d\}} &&&& {\{c\}}
	\arrow["{{\substack{a \mapsto 1}}}", from=1-1, to=1-2]
	\arrow["\begin{array}{c} {\substack{c \mapsto 1 \\ d \mapsto 2}} \end{array}"', from=1-3, to=1-2]
	\arrow[equals, from=1-3, to=2-3]
	\arrow["{{\substack{b \mapsto 2}}}", from=1-5, to=1-6]
	\arrow["\begin{array}{c} {\substack{c \mapsto 1 \\ d \mapsto 2}} \end{array}"', from=1-7, to=1-6]
	\arrow[equals, from=1-7, to=2-7]
	\arrow[from=3-3, to=2-3]
	\arrow[from=3-7, to=2-7]
\end{tikzcd}   
\end{equation}
\end{example}

\begin{lemma}\label{lemma: truth table}
Given a structured co-decomposition $d : \sint G^\op \to \ncat{FinSet}$, a feedback vertex set $S$ of $G$, and $(\tau_\sigma)_{\sigma \in \prod_{s \in S} d(s)}$ the {section test diagrams for $d$ relative to $S$}. Then 
\begin{equation}
        \lim d \cong \varnothing \text{ if and only if } \lim \tau_\sigma \cong \varnothing \text{ for each } \sigma \in \prod_{s \in S} d(s).
\end{equation}
\end{lemma}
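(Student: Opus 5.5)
We prove the contrapositive in both directions: $\lim d \neq \varnothing$ if and only if there exists $\sigma \in \prod_{s \in S} d(s)$ with $\lim \tau_\sigma \neq \varnothing$. Throughout we use the description of limits from Lemma \ref{lem lim characterization}: elements of $\lim d$ are global matching families $(a_t)_{t \in V(G)}$. The heart of the argument is the equation
\begin{equation*}
\lim d \;=\; \bigsqcup_{\sigma} \lim d_\sigma \;=\; \bigsqcup_\sigma \lim d'_\sigma,
\end{equation*}
together with a correspondence between $\lim d'_\sigma$ and $\lim \tau_\sigma$.

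\emph{Step 1: decompose $\lim d$ over $\sigma$.} Each $d_\sigma$ is a sub-diagram of $d$, so $\lim d_\sigma \subseteq \lim d$. A global matching family $a$ for $d$ lies in $\lim d_\sigma$ exactly when $a_s = \sigma_s$ for every $s \in S$. Given any $a \in \lim d$, set $\sigma = (a_s)_{s\in S}$; then $a \in \lim d_\sigma$. Hence $\lim d = \bigcup_\sigma \lim d_\sigma$.

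\emph{Step 2: filtering does not change the limit.} Applying Lemma \ref{lem im d' = im d} repeatedly along the chosen order of edges $e_1,\dots,e_n$ at each $s$ gives $\lim d'_\sigma = \lim d_\sigma$. That lemma is stated for an arbitrary structured co-decomposition over a graph, so it applies at each stage. The order of filtering is irrelevant here.

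\emph{Step 3: compare $\lim d'_\sigma$ with $\lim \tau_\sigma$.} Restricting a matching family from $G$ to $F = G - S$ gives a map $\lim d'_\sigma \to \lim \tau_\sigma$, which handles the forward direction. For the converse, take $(b_t)_{t \in V(F)} \in \lim \tau_\sigma$ and extend it by $b_s = \sigma_s$ for $s \in S$. We must check the matching condition on every edge of $G$ not lying in $F$. There are two kinds.

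\begin{itemize}
\item \textbf{Edges $e = sx$ with $x \in F$.} The vertex $x$ has been filtered by $e$, so $d'_\sigma(x)$ is contained in $\im(\pi_x)$ for the pullback $\{\sigma_s\} \times_{d(e)} d(x)$ at the moment of filtering. Later filters only shrink sets further. Hence $b_x|_e = \sigma_s|_e$. One subtlety is that $d'_\sigma(x)$ may be cut down by several edges to different $s$; each cut is an intersection, so every constraint persists.
\item \textbf{Edges $e = ss'$ with both ends in $S$.} This is the case I expect to be the main obstacle. Filtering $e$ replaces $d_\sigma(s) = \{\sigma_s\}$ by $\varnothing$ unless $\sigma_s|_e = \sigma_{s'}|_e$. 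Since the vertices of $S$ are dropped when passing to $\tau_\sigma$, this emptiness is not visible in $\tau_\sigma$.
\end{itemize}

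I would handle the second case in one of two ways. The first option is to argue that if some $d'_\sigma(s)$ becomes empty, it propagates emptiness into $F$. This works if $s$ has a neighbour in $F$ that is filtered afterwards, and I would try to exploit the filtering order for this. But it fails in general, for instance when $s$ has no neighbour in $F$. The second, cleaner option is to note that such $\sigma$ are inconsistent, so that $\lim d_\sigma = \varnothing$. One then reads the lemma's $\tau_\sigma$ as the empty diagram, or equivalently restricts attention to consistent $\sigma$. In the paper's framework I would phrase this as: $\tau_\sigma$ is understood to include the information that $d'_\sigma$ is empty somewhere. The precise handling of edges inside $S$ is where the statement needs care, and I would flag it explicitly.

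Granting this, the extension $b$ lies in $\lim d'_\sigma$. Combining Steps 1--3, $\lim d \neq \varnothing$ if and only if $\lim \tau_\sigma \neq \varnothing$ for some $\sigma$, which is the lemma.
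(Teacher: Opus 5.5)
Your Steps 1 and 2 are exactly the paper's argument: the surjection $\sum_\sigma \lim d_\sigma \to \lim d$, plus repeated use of Lemma~\ref{lem im d' = im d}. For comparing $\lim d'_\sigma$ with $\lim \tau_\sigma$, the paper only cites Corollary~\ref{cor im initial somewhere iff initial everywhere}. That corollary concerns $\Im(d'_\sigma)$, whereas $\tau_\sigma$ is the restriction of $d'_\sigma$ itself, which is not an image diagram. Your elementwise Step 3 is the argument actually needed. Restriction gives the ``if'' direction unconditionally, and your treatment of edges $sx$ with $x \in F$ is correct.

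The obstacle you flag for edges inside $S$ is real, and it is not a weakness of your method: the ``only if'' direction is false as stated. Let $G$ have vertices $s, s', x$ and the single edge $e = ss'$, and take $S = \{s, s'\}$, so $F = \{x\}$. Put $d(s) = \{a\}$, $d(s') = \{b\}$, $d(e) = \{1,2\}$ with $a \mapsto 1$, $b \mapsto 2$, and $d(x) = \{c\}$. Then $\lim d = \varnothing$ and the only $\sigma$ is $(a,b)$. No filter ever touches $x$, so $\lim \tau_\sigma = \{c\} \neq \varnothing$. Adding an edge $s'x$ with $d(s'x)$ a singleton gives a connected (tree) example. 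There the answer depends on whether $s'x$ is filtered before or after $ss'$, so the paper's remark that the filtering order is irrelevant also fails.

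Consequently your second bullet cannot be closed. Your ``second option'' should be stated as a correction of the lemma, not as a reading of the definition of $\tau_\sigma$. Let $\sigma$ range only over tuples with $\sigma_s|_e = \sigma_{s'}|_e$ for every edge $e = ss'$ of $G[S]$ (equivalently, over the limit of $d$ restricted to $G[S]$). Alternatively, declare $\tau_\sigma$ empty for the remaining $\sigma$. With that change your Steps 1--3 form a complete proof. The algorithm of Theorem~\ref{thm general shape co-decomp} needs the same compatibility check on $G[S]$.

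Your first option can also be rescued under extra hypotheses. If $S$ is inclusion-minimal, each $s \in S$ lies on a cycle of $G[F \cup \{s\}]$ and hence has a neighbour in $F$. Suppose moreover the filters are applied vertex by vertex. Then an incompatible edge $ss'$ is first filtered while processing one endpoint, which empties both endpoints. Processing the other endpoint afterwards empties its neighbours in $F$, so $\lim \tau_\sigma = \varnothing$.
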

\begin{proof}
First let us note that $\lim d_\sigma =\lim d'_\sigma$ and $\Im(d_\sigma) = \Im(d'_\sigma)$ for each $\sigma \in \prod_{s \in S} d(s)$, by Lemma \ref{lem im d' = im d}. Hence by Corollary \ref{cor im initial somewhere iff initial everywhere} $\lim d_\sigma \cong \varnothing$ if and only if $\tau_\sigma \cong \varnothing$ for each $\sigma \in \prod_{s \in S} d(s)$. Therefore it is enough to show that $\lim d \cong \varnothing$ if and only if $\lim d_\sigma \cong \varnothing$ for every $\sigma$.

Since $d_\sigma \hookrightarrow d$, using the fact that $\lim$ is a right adjoint, we have $\lim d_\sigma \subseteq \lim d$ for each $\sigma$. Hence if $\lim d \cong \varnothing$, then each $d_\sigma \cong \varnothing$.

Now using the description of Lemma \ref{lem lim characterization}, we see that $\lim d_\sigma$ is the subset of $\lim d$ consisting of those matching families $(a_t)_{t \in V(G)}$ such that $a_s = \sigma_s$ for each $s \in S$. Since every such matching family determines such a family of sections $\sigma$, this implies that the map
\begin{equation*}
    \sum_{\sigma \in \prod_{s \in S} d(s)} \lim d_\sigma \to \lim d
\end{equation*}
is a surjection. Hence if $\lim d_\sigma \cong \varnothing$ for each $\sigma$, then $\lim d \cong \varnothing$.
\end{proof}

\begin{theorem} \label{thm general shape co-decomp}
Given a structured co-decomposition $d : \sint G^\op \to \ncat{FinSet}$, let $S$ be a feedback vertex set of $G$ of size $k$, and let $w = \max_{x \in V(G)} |d(x)|$. Then there is an algorithm that correctly decides $\textsc{InLim}(\sint G^\op, \ncat{FinSet})$ in time 
\begin{equation*}
 \Oa (w^k \cdot w^2 \cdot |V(G)|).
\end{equation*}
\end{theorem}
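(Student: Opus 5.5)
The plan is to combine the truth-table reduction of Lemma \ref{lemma: truth table} with the forest algorithm \textsc{ForestInitial} (Algorithm \ref{algo forestinitial}). Given $d$ and the feedback vertex set $S$ with $|S| = k$, we first compute the forest $F = G - S$ in time linear in the size of $G$. We then enumerate the tuples $\sigma \in \prod_{s \in S} d(s)$; there are at most $w^k$ of them. For each $\sigma$ we build the section test diagram $\tau_\sigma$ and run \textsc{ForestInitial} on it. The algorithm outputs \textsc{Yes} exactly when every call returns \textsc{Yes}.

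Correctness follows directly from earlier results. By Lemma \ref{lemma: truth table}, $\lim d \cong \varnothing$ if and only if $\lim \tau_\sigma \cong \varnothing$ for all $\sigma$. By Corollary \ref{cor inlim on forests solvable in fpt time}, \textsc{ForestInitial} correctly decides $\lim \tau_\sigma \cong \varnothing$, since each $\tau_\sigma$ is forest-shaped.

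For the running time, fix $\sigma$ and account for three costs.
\begin{enumerate}
\item Forming $d_\sigma$ replaces each $d(s)$, $s \in S$, by a singleton and restricts the outgoing maps. This costs $\Oa(\sum_{s\in S}\deg(s))$.
\item Each $\Filter$ step along an edge $e = sx$ adjacent to $S$ computes a pullback of a singleton against $d(x)$, costing $\Oa(w)$ rather than $\Oa(w^2)$. Where both endpoints lie in $S$ it costs $\Oa(1)$. Summing over all edges adjacent to $S$ gives $\Oa(w \cdot |E(G)|)$.
\item Restricting to $\sint F$ costs time linear in $|V(F)|$ plus the edges of $F$.
\end{enumerate}
By Corollary \ref{cor inlim on forests solvable in fpt time}, the call to \textsc{ForestInitial} on $\tau_\sigma$ runs in $\Oa(w^2 \cdot |V(F)|) \subseteq \Oa(w^2 \cdot |V(G)|)$. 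Multiplying the per-$\sigma$ cost by the at most $w^k$ choices of $\sigma$ gives the claimed bound $\Oa(w^k \cdot w^2 \cdot |V(G)|)$, provided the filtering cost is absorbed.

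The main obstacle is that absorption. The edges incident to $S$ number $\sum_{s\in S}\deg(s)$, which could be as large as $k|V(G)|$, so naive accounting gives $\Oa(w\cdot k\cdot|V(G)|)$ per $\sigma$. There are two ways to handle this.
\begin{itemize}
\item Treat $k$ as a parameter with $k \le w^k$ whenever $w \ge 2$; the case $w \le 1$ is trivial, since every set is empty or a singleton and the problem reduces to checking local consistency. The extra factor is then absorbed into the $\Oa$, or at worst changes the bound to $\Oa(w^{k+2}|V(G)|)$ up to the constant hidden in the parameter.
\item Alternatively, group the filters at a vertex $x \in F$. All edges from $S$ into $x$ can be processed in a single pass over $d(x)$, intersecting the preimages of the fixed singletons. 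This costs $\Oa(w \cdot \deg_S(x))$, where $\deg_S(x)$ counts the neighbours of $x$ in $S$, and each such check is a constant-time dictionary lookup.
\end{itemize}
I would present the first, simpler argument. I would also note that the input description of $d$ already has size $\Omega(|E(G)|)$, so the edge-dependent terms are linear in the input.
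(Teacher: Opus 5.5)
Your algorithm and correctness argument are the paper's. You have also found the weak spot in its running-time analysis. The paper asserts that each $\tau_\sigma$ costs $\Oa(w^2\cdot|V(G)|)$ on the grounds that one applies $\Filter$ $\deg(s)$ times for each $s\in S$. That only gives a cost proportional to $\sum_{s\in S}\deg(s)$, which can be $\Theta(k\,|V(G)|)$ (e.g.\ $G=K_{k,k}$ with $S$ one side).

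Neither of your patches closes this gap:
\begin{enumerate}
\item Using $k\le w^k$ turns the surplus $w^k\cdot wk\,|V(G)|$ into $w^{2k+1}|V(G)|$, not $w^{k+2}|V(G)|$. Absorbing it into $\Oa(w^{k+2}|V(G)|)$ would need $k=\Oa(w)$. What your accounting actually proves is $\Oa((k+w)\,w^{k+1}|V(G)|)$. Hiding the factor $k$ in the constant changes the statement, whose point is that all dependence on $k$ sits in $w^k$.
\item Grouping the filters at each $x\in V(F)$ costs $\sum_{x}w\deg_S(x)$, i.e.\ $w$ times the number of $S$--$F$ edges. That is the same sum again.
\item The remark that the input has size $\Omega(|E(G)|)$ does not help, because this cost is paid once per $\sigma$, up to $w^k$ times.
\end{enumerate}

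The missing idea is to amortize over $\sigma$. Filtering an edge $sx$ with $x\in F$ depends only on the single coordinate $\sigma_s$, so fix the coordinates one at a time by depth-first search over $S=\{s_1,\dots,s_k\}$. On choosing $\sigma_{s_j}=b$:
\begin{enumerate}
\item check $b$ against the already fixed $S$-neighbours of $s_j$, and prune on failure;
\item replace the current set at each $F$-neighbour $x$ of $s_j$ by its elements $a$ with $a|_{s_jx}=b|_{s_jx}$, saving the old set so you can restore it on backtracking.
\end{enumerate}
The running time then works out as follows:
\begin{enumerate}
\item A node at depth $j$ costs $\Oa(w\deg(s_j))\subseteq\Oa(w\,|V(G)|)$, and there are at most $w^j$ such nodes. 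So for $w\ge 2$ the internal work totals $\Oa(w^{k+1}|V(G)|)$.
\item The at most $w^k$ leaves each cost $\Oa(w^2|V(G)|)$ for \textsc{ForestInitial}.
\item Reading $d$ once costs $\Oa(w\,|E(G)|)\subseteq\Oa(w(k+1)|V(G)|)$, because $G-S$ is a forest. This is also within the bound.
\end{enumerate}
At a leaf, the matching families of the current diagram on $F$ are exactly the restrictions of global matching families of $d$ taking the values $\sigma$ on $S$. Correctness therefore follows directly from Lemma~\ref{lem lim characterization}.

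The check on edges inside $S$ is essential. Lemma~\ref{lemma: truth table}, which both you and the paper cite, can fail when $S$ contains adjacent vertices, because $\tau_\sigma$ forgets the edges of $G[S]$. For example, let $G$ be a single edge $ss'$ with $S=\{s,s'\}$, $d(s)=\{q\}$, $d(s')=\{r\}$, and let $q$ and $r$ have different images in $d(ss')$. Then $\lim d=\varnothing$. But $F$ is empty, so the unique $\tau_\sigma$ is the empty diagram, whose limit is a point. The same failure occurs with $F\neq\varnothing$, e.g.\ for the path with edges $xs$ and $ss'$ when $xs$ is filtered before $ss'$. So your correctness argument, like the paper's, needs $\sigma$ that are inconsistent on $G[S]$ to be discarded.
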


\begin{proof}
Let us present the algorithm as follows. First compute the section test diagrams $(\tau_\sigma)_{\sigma \in \prod_{s \in S} d(s)}$. Computing each $\tau_{\sigma}$ takes $\mathcal{O}( w^2 \cdot |V(G)|)$ time, since one needs only to apply Filter $\text{deg}(s)$ many times for each $s \in S$. Hence computing all of them takes $\mathcal{O}(w^k \cdot w^2 \cdot |V(G)|)$ time.

Now determine if any $\tau_\sigma$ admits a non-empty limit using the algorithm of Corollary \ref{cor inlim on forests solvable in fpt time}: if so, output \textsc{No}, otherwise output \textsc{Yes}. By Lemma~\ref{lemma: truth table} this will return the correct answer. 

The running time follows since the algorithm of Corollary \ref{cor inlim on forests solvable in fpt time} is run exactly $w^k$ times.
\end{proof}

\body{
Notice that Theorem~\ref{thm general shape co-decomp} requires a feedback vertex set to be given. This may seem restrictive, since computing a smallest feedback vertex set is $\np$-complete \cite[NP problem 7]{karp1972}. However, the algorithm is practical if the feedback vertex set $S$ is small. In that case, one can use any of the many fast parameterized algorithms for feedback vertex set parameterized by solution size~\cite{cao2015feedback,chen2008improved,chitnis2015directed,cygan2013subset}.
}

\body{
We can easily extend the above results to the case where $\cat{E} = \ncat{FinSet}^\cat{C}$, where $\cat{C}$ is finite. If $X : \cat{C} \to \ncat{FinSet}$, then let $|X| = \sum_{c \in \cat{C}} |X(c)|$, and let $|\cat{C}| = \sum_{c,c' \in \cat{C}} |\cat{C}(c, c')|$.
}

\begin{corollary} \label{cor solving inlim for csets}
Let $\cat{C}$ be a category with finitely many objects and morphisms, and let $\cat{E} = \ncat{FinSet}^{\cat{C}}$. Given a structured co-decomposition $d : \sint G^\op \to \cat{E}$, let $S$ be a feedback vertex set of $G$ of size $k$, and let $w = \max_{x \in V(G)} |d(x)|$. Then there is an algorithm that correctly decides $\textsc{InLim}(\sint G^\op, \cat{E})$ in time 
\begin{equation*}
 \Oa (w^k \cdot w^2 \cdot |V(G)|).
\end{equation*}
\end{corollary}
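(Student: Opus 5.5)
The natural route is to reduce the statement for $\ncat{FinSet}^{\cat{C}}$ back to the $\ncat{FinSet}$ case. Limits in $\ncat{FinSet}^{\cat{C}}$ are computed pointwise, and a presheaf $X$ is initial exactly when $X(c) = \varnothing$ for every $c \in \cat{C}$. So $\lim d \cong \varnothing$ if and only if $\lim (\mathrm{ev}_c \circ d) \cong \varnothing$ for every object $c$, where $\mathrm{ev}_c : \ncat{FinSet}^{\cat{C}} \to \ncat{FinSet}$ is evaluation. Since $\cat{C}$ is fixed and finite, one algorithm is simply: for each $c$, run the algorithm of Theorem \ref{thm general shape co-decomp} on $d_c := \mathrm{ev}_c \circ d : \sint G^\op \to \ncat{FinSet}$ with the same feedback vertex set $S$, and answer \textsc{Yes} iff every run answers \textsc{Yes}. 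Correctness is immediate from pointwise limits plus Theorem \ref{thm general shape co-decomp}.

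I would prefer, though, to redo the proof directly in $\cat{E}$, mirroring the paper. I would carry the argument out in the following order.

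\begin{enumerate}
\item $\cat{E}$ is regular with a strict initial object, so Theorem \ref{th commuting rectangle} and Corollary \ref{cor im initial somewhere iff initial everywhere} apply.
\item $\Filter$ and $\Glue$ make sense componentwise. Lemma \ref{lem lim characterization}, Lemma \ref{lem im d' = im d} and Propositions \ref{prop glue then filter is image} and \ref{prop im d by glue then filter} hold objectwise in $c$, hence in $\cat{E}$, because monos, images and pullbacks are pointwise. This makes Algorithm \ref{algo image} correct for $\cat{E}$.
\item Lemma \ref{lemma: truth table} is adapted by letting $\sigma$ range over \emph{subobjects generated by a single element}, or more simply over $\prod_{s\in S} d(s)$, read as tuples of elements of $\sum_c d(s)(c)$.
\end{enumerate}

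The main obstacle is step 3, because it is not literally right. An element of $\lim d$ is a natural family, and fixing one element $a_s \in d(s)(c)$ does not pin down the whole restriction to $s$. The clean fix is the evaluation reduction from the first paragraph, so I would actually present the proof that way, with the direct argument as motivation.

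For the running time, set $w_c = \max_x |d(x)(c)| \le w$. Run $c$ costs $\Oa(w_c^{k}\cdot w_c^2\cdot |V(G)|)$, so the total is $\Oa(|\cat{C}|\cdot w^{k+2}|V(G)|)$. Treating $\cat{C}$ as fixed absorbs the factor $|\cat{C}|$, and in fact $\sum_c w_c^{k+2} \le (\sum_c w_c)^{k+2} \le w^{k+2}$, since $\sum_c w_c \le w$ up to the maximum being taken per object. This gives the claimed bound. Reading off $d_c$ from the stored dictionaries costs only linear time in the size of $d$, which is dominated by this bound.
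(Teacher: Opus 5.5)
Your argument is correct, but it takes a different route from the paper.

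\textbf{How the two proofs differ.} The paper works inside $\cat{E}$. Because images and limits in $\ncat{FinSet}^{\cat{C}}$ are objectwise, it asserts that all earlier results (Filter, Glue, Algorithm~\ref{algo image}, the section test diagrams, Lemma~\ref{lemma: truth table}) ``go through as stated'' with sets read as functors. It accounts for the extra cost by setting $|d(x)| = \sum_c |d(x)(c)|$. You instead use Theorem~\ref{thm general shape co-decomp} as a black box, once per object of $\cat{C}$. This rests on the fact that $\lim d \cong \varnothing$ if and only if $\lim(\mathrm{ev}_c \circ d) = \varnothing$ for every $c$. That follows from pointwise limits and the fact that the initial $\cat{C}$-set is empty at every object.

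\textbf{What your route buys.} It gives rigor exactly where the paper's sketch is thin: the paper never says what an ``element'' $\sigma_s \in d(s)$ is. Your worry in step 3 is justified. If $d_\sigma(s)$ is taken to be the subfunctor generated by an element, the section test can fail. For example:
\begin{itemize}
\item take $\cat{C}$ to be the one-object category with endomorphism monoid $\mathbb{Z}/2$;
\item take $G$ to be a triangle on $s,x,y$ with $S=\{s\}$;
\item send every object to the free orbit $\{u,v\}$, with all maps identities except one edge map, which is the swap.
\end{itemize}
Then $\lim d \cong \varnothing$. But each element generates all of $d(s)$, so $d_\sigma = d$ and the filters change nothing. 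Hence $\tau_\sigma$ on the edge $xy$ has nonempty limit, reproducing the cycle failure of (\ref{eq cycle co-decomp}). Running one family of section tests per object $c$ is the natural rigorous reading of ``objectwise'', and that is precisely your reduction.

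\textbf{Running time.} Your black-box bound carries a factor $|\text{Obj}(\cat{C})|$. This is harmless, since $\cat{C}$ is fixed by the problem. However, drop the ``in fact'' refinement. Since $w_c = \max_x |d(x)(c)|$ is maximized separately for each $c$, you have
\begin{equation*}
w \le \sum_c w_c \le |\text{Obj}(\cat{C})|\, w,
\end{equation*}
so $\sum_c w_c \le w$ is false in general. For instance, take $\cat{C}$ discrete on $c_1,c_2$ with $|d(x_1)(c_1)| = |d(x_2)(c_2)| = w$ and the other values empty; then $\sum_c w_c = 2w$. Getting a $\cat{C}$-independent bound requires opening the box. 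One would use that at each $c$ there are at most $\prod_{s\in S}|d(s)(c)| \le w^k$ tuples, and that $\sum_c |d(x)(c)|\,|d(y)(c)| \le |d(x)|\,|d(y)| \le w^2$ for each forest edge $xy$.
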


\begin{proof}
Since images and limits are computed objectwise in $\cat{E}$, all of the previous results go through as stated, now interpreting all sets as functors $\cat{C} \to \ncat{FinSet}$. The only difference now is that every pullback requires checking all of the compatibilities coming from $\cat{C}$ as well, along with computing the section test diagrams, which is accounted for by setting $|d(x)| = \sum_{c \in \cat{C}} |d(x)(c)|$.
\end{proof}

\section{Discussion}
In this paper, we have shown that categorical problems about the initiality of limits can be approached using ideas from parameterized complexity. For tree-shaped diagrams, recursive computation via pullbacks allows one to decide initiality efficiently, with running time linear in the number of objects. For diagrams of arbitrary shape, we reduce the problem to a family of tree-structured diagrams using feedback vertex sets, yielding a fixed-parameter tractable algorithm whose complexity depends on the size of the feedback vertex set rather than the full diagram. 

In subsequent papers, we will apply these results to develop fast dynamic programming algorithms for generalized CSPs (constraint satisfaction problems) and their categorical duals, co-CSPs. To hint at how this is done, consider the concrete case in which we are given a graph $X$ and a structured decomposition \cite{bumpus2025structured} $d : \sint G \to \ncat{Gr}$ with colimit $X$. The $3$-coloring problem can be encoded as a presheaf $\ncat{Gr}(-, K_3)$ where $K_3$ is the irreflexive complete graph on three vertices. The composite diagram \[\sint G^{\op} \xrightarrow{d} \ncat{Gr}^{\op} \xrightarrow{\ncat{Gr}(-, K_3)} \ncat{Set}\] determines a structured co-decomposition whose bags have size at most $\alpha = 3^{\max_{x \in G} |d(x)|}$. This is a structured co-decomposition of the solution space of the three coloring problem on $X$. Since representables turn colimits into limits
\[\lim (\ncat{Gr}(-, K_3) \circ d) \cong \varnothing \; \; \text{if and only if $X$ is $3$-colorable.} \]   
Thus, applying our main result, we obtain a generalization of the classical FPT-time dynamic programming algorithm for $3$-\textsc{Coloring} parameterized by $\max_{x \in G} |d(x)|$ and the feedback vertex number $k$ of the auxiliary graph $G$. In practical terms, this yields an algorithm running in time \[ \Oa (\alpha^k \cdot \alpha^2 \cdot |V(G)|).\]  

Classically, algorithmicists think of dynamic programming as computing over (tree) decompositions; however, our results make clear that the relevant data structure is the co-decomposition of the solution space. This suggests that these techniques were never really about graphs in the first place. Indeed our main theorem and the discussion above is object agnostic. These results and their applications show that, although categorical algorithmics is a severely understudied area of research, simple constructions can already bear fruitful results and thus deserve further investigation.

\printbibliography

@misc{bumpus2025lasso,
      title={Lassos: Pushing Tree Decompositions Forward Along Homomorphisms}, 
      author={Benjamin Merlin Bumpus and James Fairbanks and Will J. Turner},
      year={2025},
      eprint={2408.15184},
      archivePrefix={arXiv},
      primaryClass={math.CO},
}

@misc{bumpus2025structured,
      title={Structured Decompositions: Structural and Algorithmic Compositionality}, 
      author={Benjamin Merlin Bumpus and Zoltan A. Kocsis and Jade Edenstar Master and Emilio Minichiello},
      year={2025},
      eprint={2207.06091},
      archivePrefix={arXiv},
      primaryClass={math.CT},
}

@article{cao2015feedback,
  title={On feedback vertex set: New measure and new structures},
  author={Cao, Yixin and Chen, Jianer and Liu, Yang},
  journal={Algorithmica},
  volume={73},
  number={1},
  pages={63--86},
  year={2015},
  publisher={Springer},
  doi={https://doi.org/10.1007/s00453-014-9904-6}
}

@article{chen2008improved,
  title={Improved algorithms for feedback vertex set problems},
  author={Chen, Jianer and Fomin, Fedor V and Liu, Yang and Lu, Songjian and Villanger, Yngve},
  journal={Journal of Computer and System Sciences},
  volume={74},
  number={7},
  pages={1188--1198},
  year={2008},
  publisher={Elsevier},
  doi={https://doi.org/10.1016/j.jcss.2008.05.002}
}

@article{chitnis2015directed,
  title={Directed subset feedback vertex set is fixed-parameter tractable},
  author={Chitnis, Rajesh and Cygan, Marek and Hajiaghayi, Mohammataghi and Marx, D{\'a}niel},
  journal={ACM Transactions on Algorithms (TALG)},
  volume={11},
  number={4},
  pages={1--28},
  year={2015},
  publisher={ACM New York, NY, USA},
  doi={https://doi.org/10.1145/2700209}
}

@article{cygan2013subset,
  title={Subset feedback vertex set is fixed-parameter tractable},
  author={Cygan, Marek and Pilipczuk, Marcin and Pilipczuk, Micha{\l} and Wojtaszczyk, Jakub Onufry},
  journal={SIAM Journal on Discrete Mathematics},
  volume={27},
  number={1},
  pages={290--309},
  year={2013},
  publisher={SIAM},
  doi={https://doi.org/10.1137/110843071}
}

@book{borceux1994handbook,
  title={Handbook of categorical algebra: Basic category theory},
  author={Borceux, Francis},
  volume={1},
  year={1994},
  publisher={Cambridge University Press},
  doi={https://doi.org/10.1017/CBO9780511525858}
}

@book{riehl2017category,
  title={Category theory in context},
  author={Riehl, Emily},
  year={2017},
  publisher={Dover Publications},
  note={ISBN:9780486809038}
}

@incollection{gran2021introduction,
  title={An introduction to regular categories},
  author={Gran, Marino},
  booktitle={New Perspectives in Algebra, Topology and Categories: Summer School, Louvain-la-Neuve, Belgium, September 12-15, 2018 and September 11-14, 2019},
  pages={113--145},
  year={2021},
  publisher={Springer},
  doi={https://doi.org/10.1007/978-3-030-84319-9_4}
}

@misc{dey2026limitcomputationposetsminimal,
      title={Limit Computation Over Posets via Minimal Initial Functors}, 
      author={Tamal K. Dey and Michael Lesnick},
      year={2026},
      eprint={2601.00209},
      archivePrefix={arXiv},
      primaryClass={math.AT},
}

@incollection{karp1972,
  title={Reducibility among combinatorial problems},
  author={Karp, Richard M},
  booktitle={50 Years of Integer Programming 1958-2008: from the Early Years to the State-of-the-Art},
  pages={219--241},
  year={2009},
  publisher={Springer},
  doi={https://doi.org/10.1007/978-3-540-68279-0_8}
}

@article{patterson2022categorical,
  title={Categorical data structures for technical computing},
  author={Patterson, Evan and Lynch, Owen and Fairbanks, James},
  journal={Compositionality},
  volume={4},
  year={2022},
  publisher={Episciences. org},
  doi={https://doi.org/10.32408/compositionality-4-5}
}

@article{gorenpresenting,
  title={Presenting Profunctors},
  author={Goren-Roig, Gabriel and Meyers, Joshua and Minichiello, Emilio},
  journal={EPTCS 429},
  year = {2024},
  pages={88--114},
  doi={https://doi.org/10.4204/EPTCS.429.5}
}

@article{meyers2022fast,
  title={Fast Left Kan Extensions Using the Chase: J. Meyers et al.},
  author={Meyers, Joshua and Spivak, David I and Wisnesky, Ryan},
  journal={Journal of Automated Reasoning},
  volume={66},
  number={4},
  pages={805--844},
  year={2022},
  publisher={Springer},
  doi ={https://doi.org/10.1007/s10817-022-09634-2}
}

@article{brown2023computational,
  title={Computational category-theoretic rewriting},
  author={Brown, Kristopher and Patterson, Evan and Hanks, Tyler and Fairbanks, James},
  journal={Journal of Logical and Algebraic Methods in Programming},
  volume={134},
  year={2023},
  publisher={Elsevier},
doi = {https://doi.org/10.1016/j.jlamp.2023.100888}
}

@article{schultz2017algebraic,
  title={Algebraic Databases},
  author={Schultz, Patrick and Spivak, David and Vasilakopoulou, Christina and Wisnesky, Ryan},
  journal={Theory and Applications of Categories},
  volume={32},
  number={16},
  pages={547--619},
  year={2017}
}
\end{document}